\numberwithin{equation}{section}
\newcommand{\CUO}{(\lambda/\mu)}
\newcommand{\frexp}{\alpha}
\newcommand{\Tau}{\mathcal{T}}
\newcommand{\Et}[1]{L({#1})}
\newcommand{\Dt}[1]{D(#1)}
\newcommand{\rate}[1]{\lambda_{#1}+\mu_{#1}}
\newcommand{\pijt}{p_{i,j,\frexp}(t)}
\newcommand{\pjnu}{p_{ j,\frexp}}
\newcommand{\coef}{\theta}
\newcommand{\dPE}{h_\frexp(u,t)du}
\newcommand{\dPEc}{h_\frexp(u,1)du}
\newcommand{\dPEb}{h_\frexp(u,t)}
\newcommand{\C}{\mathbb{C}}
\newcommand{\N}{\mathbb{N}}
\newcommand{\Qt}[1]{\psi_{#1}(\coef)}
\newcommand{\SUMT}[1]{\cS_{n,\frexp}}
\newcommand{\pojt}{p_{0,j,\frexp}(t)}
\newcommand{\piot}{p_{i,0,\frexp}(t)}
\newcommand{\defpijt}{P[N_{\frexp}(t)=j|N_{\frexp}(0)=i]}
\newcommand{\Fracdef}[1]{\frac{1}{\Gamma(1-#1)}\int_0^t \frac{f^{\prime}(s)}{(t-s)^#1}ds}
\newcommand{\derFr}[1]{\mathbb{D}^#1 f}
\newcommand{\derfra}[1]{\mathbb{D}^\frexp #1}
\newcommand{\derfrM}[1]{\mathbb{D}^#1 P(t)}
\newcommand{\BACK}{\mu_ip_{i-1,j,\frexp}(t)-(\lambda_i+\mu_i)p_{i,j,\frexp}(t)+\lambda_i p_{i+1,j,\frexp}(t)}
\newcommand{\BACKLAP}{\mu_i\Phi_{i-1,j}(s^\frexp)-(\lambda_i+\mu_i)\Phi_{i,j}(s^\frexp)+\lambda_i \Phi_{i+1,j}(s^\frexp)}
\newcommand{\BACKLAPB}{\mu_i\Psi_{i-1,j}(s)-(\lambda_i+\mu_i)\Psi_{i,j}(s)+\lambda_i \Psi_{i+1,j}(s)}
\newcommand{\FORW}{\lambda_{j-1}p_{i,j-1,\frexp}(t)-(\lambda_j+\mu_j)p_{i,j,\frexp}(t)+\mu_{j+1} p_{i,j+1,\frexp}(t)}
\newcommand{\FORWL}{\lambda_{j-1}\PP_{i,j-1}-(\lambda_j+\mu_j)\PP_{i,j}+\mu_{j+1} \PP_{i,j+1}}
\newcommand{\FORWLB}{\left(\lambda_{j-1}\PP_{i,j-1}-\lambda_j\PP_{i,j} \right)+\left(\mu_{j+1} \PP_{i,j+1}-\mu_j\PP_{i,j}\right)}
\newcommand{\FORWsp}{\lambda_{j-1}p_{j-1,\frexp}(t)-(\lambda_j+\mu_j)p_{j,\frexp}(t)+\mu_{j+1} p_{j+1,\frexp}(t)}
\newcommand{\FORWqsd}{\lambda_{j-1}\qsd_{j-1}-(\lambda_j+\mu_j)\qsd_{j}+\mu_{j+1} \qsd_{j+1}}
\newcommand{\LAPL}[1]{\mathcal{L}\left[#1\right](s)}
\newcommand{\ML}[1]{E_{\frexp,1}(#1)}
\newcommand{\defML}[1]{\sum_{k \geq 0}\frac{#1^k}{\Gamma(\frexp k+1)}}
\newcommand{\qsd}{\nu}
\newcommand{\PP}{P}
\newcommand{\pjt}{p_{1,j,\frexp}(t)}
\newcommand{\Pij}{\PP_{i,j}}
\newcommand{\QLD}{\frac{\PP_{i,j}}{\sum_{n \geq 1} \PP_{i,j}}}
\newcommand{\QLDb}{\frac{\pi_j}{\mu_1}\frac{C_{i,j,1}}{C_{i,1,2}}}
\newcommand{\cS}{\mathcal{S}}
\newtheorem{remark}{Remark}
\newtheorem{theorem}{Theorem}
\newtheorem{definition}{Definition}
\newtheorem{proposition}{Proposition}
\begin{document}
\begin{frontmatter}
	
	\title{Time-fractional Birth and Death Processes}	
	\author[mysecondaryaddress]{Jorge Littin Curinao}
	\ead{jlittin@ucn.cl}	
	\address[mymainaddress]{Universidad Cat\'olica del Norte, Departamento de Matem\'aticas, Avenida Angamos 0610, Antofagasta - Chile}
\begin{abstract}

In this article, we provide different representations for a time-fractional birth and death process $N_{\frexp}(t)$, whose transition probabilities  $\defpijt$ are governed by a time-fractional system of differential equations. More specifically, we present two equivalent characterizations for its trajectories: the first one as a time-changed classic birth and death process, whereas the second one is a Markov renewal process. Also, we provide results for the asymptotic behavior of the process conditioned not to be killed. The most important is that the concept of quasi-limiting distribution and quasi-stationary distribution do not coincide, which is a consequence of the long-memory nature of the process. As an application example, we revisit the linear case to show the consequences of our main theorems.
	\end{abstract}	
	\begin{keyword}
		Fractional processes\sep quasi-limiting distribution \sep inverse stable subordinator.
		\MSC[2010] 60G22  \sep  60G18  \sep 60K15 . 
	\end{keyword}	
\end{frontmatter}
%\linenumbers
\section{Introduction}
The birth and death processes have been extensively studied in different areas of both probability theory and its applications in population models, epidemiology, queuing theory, and engineering, to name a few. Two fundamental aspects related to its analysis are the representation of the transition probabilities that model the evolution of the system and the asymptotic behavior after a long time.

Since many processes exhibit the phenomenon of long memory, a Markov process seems no appropriate at all, so that fractional models appear to be more precise. Time-fractional models in the context of anomalous diffusion have been studied previously by Orsingher \cite{Orsingher2004,orsingher2009}, where the time-fractional telegraph equation and a fractional diffusion equation where analyzed respectively. Previous results for fractional birth and death processes can be found in the articles of Orsingher \cite{LinearBDP2011} for the linear case, Meerschaert \cite{meerschaert2011} for the fractional Poisson process and Jumarie \cite{JUMAR2010} for a pure birth and death process with multiple births. Surprisingly, none of them provide representations for an arbitrary time-fractional birth and death process. This means that results concerning the asymptotic behavior are no available in the fractional case. 

For Markov processes, the study of the number of survival after a long time started with the early work of Kolmogorov in 1938. Later in 1947, Yaglom \cite{Yaglom} showed that the limit behavior of sub-critical branching processes conditioned to survival was given by a proper distribution. In 1965, Darroch \& Seneta \cite{Darroch} started the study of Quasi Stationary Distributions (qsd) on finite state irreducible Markov Chain, whereas Seneta and Vere Jones \cite{Vere} on 1966 did it for Markov Chains with countable states. A  very important publication was done by Van Doorn in 1991 \cite{Doorn91}, which states a criteria to determine the existence and uniqueness of qsd for birth and death chains. More recent results about the existence and uniqueness of qsd can be revised on \cite{VANDOORN20122400, VANDOORN20131}.

For diffusion processes on the half-line, the first work is due to Mandl \cite{Mandl1961}, who studied the existence of a qsd on the half-line for $+\infty$ being a natural boundary accordingly to Feller's classification. In subsequent works, many some result of existence of qsd and limit laws  for one dimensional diffusions killed at 0 are provided by Ferrari \cite{ferrari1995}, Collet Mart\'inez San Martín \cite{mpsm}   and Mart\'inez San Mart\'in \cite{cmsm,smjsm}.

Most of these works are based on studying the spectral decomposition of the infinitesimal operator associated with the process. Applying similar ideas, we can study the asymptotic behavior of time-fractional models, which is precisely one of the main objectives of this article.

This article is organized as follows: in section \ref{PRESMOD}, we present the model description. More specifically, we introduce the system of time-fractional equations that governs the transition probabilities. In section \ref{CARPRO}, two equivalent characterizations are shown: the first one is a time-changed birth and death process, whereas the second one is a Markov Renewal process. In section \ref{REPSP}, we follow a different approach based on a spectral representation of the transition probabilities to study the quasi limiting behavior of the process conditioned not to be killed. In section \ref{QSDB}, we study the concept of quasi-stationary distributions proving that the quasi limiting distribution and quasi-stationary distribution are not the same. Finally, in section \ref{EXAM}, we apply the main theorems to the linear model.

\section{Model formulation}\label{PRESMOD}
We call $N_{\frexp}(t)$, $t \geq 0$, to the fractional birth and death process killed at zero. The transition probabilities denoted by
\begin{equation}
\pijt=\defpijt
\end{equation}
are governed by the time-fractional  system of differential equations  (commonly called system of backward equations)
\begin{eqnarray}\label{BACKW}
\derfra{\pijt}&=&\BACK,\hspace{1cm} j \geq 1,\\
\pojt&=&0.
\end{eqnarray}
As usual, the values $\lambda_i>0, \mu_i>0$ (with the convention $\mu_0=\lambda_0=0$) are  the birth rates and the death rates respectively, whereas the parameter $\frexp \in (0,1]$ determines the order of the  Caputo-Riemann-Liouville fractional  operator $\derfra{(\cdot)}$, defined as
\begin{eqnarray}
\derFr{\frexp}(t)&=&\Fracdef{\frexp},\hspace{2cm}\frexp \in (0,1),\\
\derFr{1}(t)&=&f^{\prime}(t), \hspace{4.9cm}\frexp =1.
\end{eqnarray}
In particular, when $\frexp=1$ the operator is just the derivative and $N_1(t)$ is the classical birth and death process. The matrix formulation for the  system of equations \eqref{BACKW} is
\begin{equation}
\derfrM{\frexp}=QP(t),
\end{equation}
where  $P(t)$ is the matrix with coefficients $\pijt$, $i \geq 0$, $j \geq 0$ and the matrix $Q$ is defined as
\begin{eqnarray}\label{qij}
q_{i,j}= \left\{ \begin{array}{ccc}
-(\lambda_i+\mu_i) &   if  & i=j, \\
\\ \mu_i &  if & i=j-1, \\
\\ \lambda_i &  if  & i=j+1 .
\end{array}
\right. 
\end{eqnarray}
The initial condition is the  Kronecker delta $p_{i,j,\frexp}(0)=\delta_{i,j}$. 

\section{Two equivalent characterizations}\label{CARPRO}
In this section, we introduce the representation of the process $N_\frexp (t)$ as a usual birth and death chain changed in time, this is  $N_\frexp (t)=N_1(\Et{t})$, where $\Et{t}$ is the inverse of a stable subordinator. Also, we get a representation as a  Markov Renewal Process in the general case. To make this work self contained, we introduce first some general facts concerning stable subordinators and its inverse.

\subsection{The stable subordinator and its inverse}
A  subordinator $\Dt{t}$, $t \geq 0$ is a one-dimensional Levy Process such that their trajectories are non decreasing with probability $1$. In particular, we say that a subordinator is stable when the  Laplace transform of the $\Dt{1}$ satisfy
\begin{equation}\label{LaplT}
E[e^{-s \Dt{t}}]=e^{-ts^\frexp}.
\end{equation}
Associated to a subordinator $\Dt{t}$, we define the inverse process  $\Et{t}$, $t \geq 0$ as follows
\begin{equation}\label{ET}
\Et{t}=\inf \{ r \geq 0: \Dt{r}>t \}.
\end{equation}
The process $\Et{t}$ denotes the  first time that  $\Dt{t}$ exceeds a level $t>0$. It is clear that the trajectories of the process $\Et t$ are  non-decreasing and continuous. From the equation \eqref{ET}, we can deduce that the finite dimensional distributions of $\Dt{t}$ and $\Et{t}$ satisfy the identity
\begin{equation}\label{ETDT}
P\left[ \Et{t_i} > x_i, 1 \leq i \leq n\right]=P\left[ \Dt{x_i} < t_i, 1 \leq i \leq n\right].
\end{equation}
The equation \eqref{LaplT}  directly implies that the process $\Dt{t}$ is self similar of index $1/\frexp$, {\it i.e}
\begin{equation}\label{SSim}
P\left[ \Dt{cx_i} < t_i, 1 \leq i \leq n\right]=P\left[ c^{1/\frexp}\Dt{x_i} < t_i, 1 \leq i \leq n\right].
\end{equation}
Moreover, from equations \eqref{ETDT} and \eqref{SSim} we have that the process $\Et{t}$ is self similar of index $\frexp$
\begin{equation}
P\left[ \Et{c t_i} > x_i, 1 \leq i \leq n\right]=P\left[ c^\frexp \Et{t_i} > x_i, 1 \leq i \leq n\right].
\end{equation}
For all $t>0$, the distribution of  $D(t)$ has only a density component, here denoted by $g_\frexp(\cdot,t)$. In the same way, the inverse $\Et t$ has only a density component $h_\frexp(\cdot,t)$  satisfying
\begin{equation}
h_\frexp(u,t)=\frac{t}{\frexp}{u^{-1-1/\frexp}}g_\frexp(t u ^{-\frac{1}{\frexp}},t).
\end{equation}
Concerning the Laplace transform of $h_\frexp(u,t)$ we have the identities
\begin{eqnarray}\label{LEBSTI}
\int_{0}^{\infty}  e^{-s t} \dPEb dt&=&s^{\frexp-1}e^{-us^\frexp},\\
 \int_{0}^{\infty}  e^{-st} \dPEb du&=&\ML{-st^\frexp},
\end{eqnarray}
where $\ML{\cdot}$ is the Mittag-Leffler function formally defined as
\begin{equation}
\ML{z}=\defML{z},\hspace{1cm} z \in \C.
\end{equation}
Finally, we emphasize that the  increments of the process  $\Et{t}$  are dependent and non-stationary (see corollary 3.3 and 3.4 from  \cite{meerschaert2004} for a formal proof ot this fact).
\subsection{The time changed process}
\begin{theorem}\label{ChanTime}
For all $\frexp \in (0,1)$, the stochastic process $N_\frexp(t)$, $t \geq 0$ admits a representation (in the sense ot the finite-dimensional distributions) into the form 
	\begin{equation}
	N_\frexp(t)=N_1(\Et{t}),
	\end{equation}
  where $N_1(t)$ is an usual birth  and death process and $\Et{t}$ is the inverse of a stable subordinator  independent of $N_1(t)$.
\end{theorem}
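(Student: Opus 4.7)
\textbf{Proof plan for Theorem \ref{ChanTime}.} My approach is to construct the candidate process $\tilde N(t):=N_1(\Et{t})$ and verify that its one-dimensional transition probabilities solve the backward system \eqref{BACKW} with the correct initial condition, so that uniqueness forces equality with $\pijt$. The extension to finite-dimensional distributions is then obtained by a direct conditioning argument on the time-change, because $N_\frexp$ is not Markov and no abstract uniqueness theorem at the level of finite-dimensional distributions is available from \eqref{BACKW} alone.

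First I would condition on $\Et{t}=u$ and use the independence of $N_1$ and $\Et{\cdot}$ to write
\begin{equation}
\tilde p_{i,j}(t):=P[N_1(\Et{t})=j\mid N_1(0)=i]=\int_0^\infty p_{i,j}(u)\,\dPE,
\end{equation}
where $p_{i,j}(u)$ denotes the transition probabilities of the classical birth and death process $N_1$. Taking Laplace transform in $t$ and applying the first identity of \eqref{LEBSTI}, Fubini yields $\widehat{\tilde p}_{i,j}(s)=s^{\frexp-1}\Psi_{i,j}(s^\frexp)$, where $\Psi_{i,j}(\coef):=\int_0^\infty e^{-\coef u}p_{i,j}(u)\,du$ is the Laplace transform of the non-fractional transition function.

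Next, the Laplace transform of the classical backward equation for $N_1$ reads $\coef\Psi_{i,j}(\coef)-\delta_{i,j}=\mu_i\Psi_{i-1,j}(\coef)-(\lambda_i+\mu_i)\Psi_{i,j}(\coef)+\lambda_i\Psi_{i+1,j}(\coef)$. Substituting $\coef=s^\frexp$ and multiplying by $s^{\frexp-1}$ gives
\begin{equation}
s^\frexp\,\widehat{\tilde p}_{i,j}(s)-s^{\frexp-1}\delta_{i,j}=\mu_i\,\widehat{\tilde p}_{i-1,j}(s)-(\lambda_i+\mu_i)\,\widehat{\tilde p}_{i,j}(s)+\lambda_i\,\widehat{\tilde p}_{i+1,j}(s).
\end{equation}
The left-hand side is precisely the Laplace transform of $\derfra{\tilde p_{i,j}(t)}$ by the standard Caputo rule combined with $\tilde p_{i,j}(0)=\delta_{i,j}$. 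Inverting the Laplace transform therefore shows that $\tilde p_{i,j}(t)$ solves \eqref{BACKW} with the correct initial data, and uniqueness of the bounded solution forces $\tilde p_{i,j}(t)=\pijt$.

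To promote the one-dimensional identity to finite-dimensional distributions at $0\le t_1<\cdots<t_n$, I would condition on the ordered vector $(\Et{t_1},\ldots,\Et{t_n})$ and apply the Markov property of $N_1$ at these random epochs, expressing $P[\tilde N(t_k)=j_k,\,1\le k\le n\mid\tilde N(0)=i]$ as an integral of the products $\prod_k p_{j_{k-1},j_k}(u_k-u_{k-1})$ against the joint density of the inverse subordinator, and then match multivariate Laplace transforms in the $n$ variables $(s_1,\ldots,s_n)$ using the multivariate analogue of \eqref{LEBSTI}. The main obstacle is exactly this last step: since $N_\frexp$ is \emph{not} Markov, its joint distributions are not determined by \eqref{BACKW}, so the equality of joint laws has to be read off directly from the time-change construction (taken as the natural convention for the unspecified joint distributions of $N_\frexp$) rather than from an abstract uniqueness principle applied to the backward system.
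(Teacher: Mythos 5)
Your proposal is correct and follows essentially the same route as the paper: write $\tilde p_{i,j}(t)=\int_0^\infty p_{i,j,1}(u)h_\frexp(u,t)\,du$, take the Laplace transform to get $s^{\frexp-1}\Phi_{i,j}(s^\frexp)$ via \eqref{LEBSTI}, substitute into the Laplace-transformed classical backward system, and invert to conclude that the subordinated probabilities solve \eqref{BACKW}. Your additional remarks on promoting the identity to finite-dimensional distributions (and the observation that these cannot be pinned down by \eqref{BACKW} alone, since $N_\frexp$ is not Markov) go beyond the paper's proof, which establishes only the one-dimensional marginals and treats the joint laws in a subsequent remark via the time-change construction --- exactly the convention you identify.
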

\begin{proof}
Given fixed $i > 1$ and  $j \geq 0$, for all $t>0$ we have
	\begin{eqnarray}
	P_i[N_1(\Et{t})=j]&=&\int_{0}^{\infty}  p_{i,j,1}(u) \dPEb dt\\
	&=&E[p_{i,j,1}(\Et t)].
	\end{eqnarray}
It suffices to prove that $E[p_{i,j,1}(\Et t)]$ is the solution to the system of equations  \eqref{BACKW}. We claim that
	\begin{equation}
	\Psi_{i,j}(s)=\int_{0}^{\infty} e^{-s t}E[p_{i,j,1}(\Et t)]dt
	\end{equation}  
satisfies the identity $$\Psi_{i,j}(s)=s^{\frexp-1}\Phi_{i,j}(s^\alpha),$$
	 being $\Phi_{i,j}(s)=\int_{0}^{\infty} e^{-s t}p_{i,j,1}(t)dt$ the Laplace transform of $p_{i,j,1}(t)$. In fact
	\begin{eqnarray}
	\Psi_{i,j}(s)&=&\int_{0}^{\infty}e^{-s t}\left( \int_{0}^{\infty} p_{i,j,1}(u) \dPE\right)dt\nonumber\\
	&=&\int_{0}^{\infty}p_{i,j,1}(u) \left( \int_{0}^{\infty}  e^{-s t} \dPEb dt \right)du\nonumber\\
	&=&s^{\frexp-1}\int_{0}^{\infty}p_{i,j,1}(u)  e^{-s^\frexp u}du\nonumber\\
	&=&s^{\frexp-1}\Phi_{i,j}(s^\alpha)\label{LAPID}.
	\end{eqnarray}
On the other hand, it is well known that $\Phi_{i,j}(s^\alpha)$ defined above, satisfy the system of equations
	\begin{equation}\label{LAPTRPRO}
	s^\frexp \Phi_{i,j}(s^\alpha)-\delta_{i,j}=\BACKLAP.
	\end{equation}
	By combining the equations \eqref{LAPID} and \eqref{LAPTRPRO} we get 
	\begin{equation}
	\frac{s \Psi_{i,j}(s)-\delta_{i,j}}{s^{1-\frexp}}=\BACKLAPB,
	\end{equation}
	 and finally, by taking the inverse transform, we deduce directly  that $P_i[N_1(\Et{t})=j]$ is the solution to the system \eqref{BACKW}, concluding the proof.
\end{proof}
\begin{remark}
  The trajectories of $\Et{t}$ are non-decreasing, so that the theorem \ref{ChanTime} can be used to obtain the finite dimensional distributions of $N_\frexp(t)$
	\begin{eqnarray}
	P_i[N_\frexp(t_l)=j_l; 1 \leq l \leq n]&=&P_i[N_1(\Et{t_l})=j_l; 1 \leq l \leq n]\\
	&=&\prod_{l=0}^{n-1}P_{j_l}[N_1(\Et{t_{l+1}}-\Et{t_l})=j_{l+1}],\label{finitedim}
	\end{eqnarray}
with the convention $j_0=i$, $t_0=0$.	
\end{remark}

\subsection{Markov Renewal Process}

\begin{definition}
Let $\{X_n\}_{n \geq 0}$ be a Markov chain with states in  $\N_0$ and let $\{\cS_n\}_{n \geq 0}$ be a sequence of random times satisfying  $\cS_0=0$, $\cS_n < \cS_{n+1}$ for all $n \geq 0$. The stochastic processes $\{X_n,\cS_n\}_{n \geq 0}$ is called a Markov Renewal process  with space state $\N_0=\{0,1,2,3,\cdots\}$ if the identity
	\begin{eqnarray*}
		P[X_{n+1}=j,\Tau_{n+1} \leq t|(X_i,\cS_i), 1 \leq i \leq n]&=&P[X_{n+1}=j,\Tau_{n+1} \leq t|X_n]
	\end{eqnarray*}
follows for all $j \in \N_0$, $n \geq 0$ and $t \geq 0$. Here $\Tau_{n+1}=\cS_{n+1}-\cS_n$, $n \geq 0$ are called the inter arrival times.
\end{definition}
Connected to a Markov Renewal Process we consider the transition probabilities
\begin{equation}
p_{i,j}=P[X_{n+1}=j|X_n=i]
\end{equation}
and the kernel
\begin{equation}
Q_{i,j}(t)=P[X_{n+1}=j,\Tau_{n+1} \leq t|X_n].
\end{equation}
The transition probabilities are recovered in the limit $t \rightarrow \infty$
\begin{eqnarray}
\lim_{t \rightarrow \infty } Q_{i,j}(t)=\lim_{t \rightarrow \infty } P[X_{n+1}=j,\Tau_{n+1} \leq t|X_n]
&=& p_{i,j}.
\end{eqnarray}
By introducing the notation
\begin{equation}
G_{i,j}(t)=\frac{Q_{i,j}(t)}{p_{i,j}},
\end{equation}
from a direct computation we get
\begin{equation}
P[\Tau_{n} \leq t|X_{n-1}=i,X_{n}=j]=G_{i,j}(t)
\end{equation}
and more generally for all finite collections of times $0<t_1<t_2<\cdots <t_n$ 
\begin{equation}\label{findimdis}
P[\Tau_{i} \leq t_i; 1 \leq i \leq n|X_0,X_1,\cdots X_n]=\prod_{i=1}^{n}G_{X_{i-1},X_{i}}(t_{i}).
\end{equation}
The  equation \ref{findimdis} implies that the inter arrival times $\Tau_i$, $ i \geq 0$ conditioned to the chain $X_n$, $n\geq 0$ are independent with distribution $G_{X_i,X_{i+1}}$. It is well known that a  Markov Renewal Process is characterized by  $G_{X_i,X_{i+1}}(t)$ and the transition probabilities $p_{i,j}$. In addition, this is a Markov process if and only if $G_{X_i,X_{i+1}}(t)=1-e^{-r(X_i,X_{i+1})t}$, for some positive rate $r(X_i,X_{i+1})$. A more detailed review of these results can be found in \cite{cinlar2013}. The following theorem states that $N_\frexp(t)$ is a Markov Renewal Process.

\begin{theorem}\label{SMP}
For all $\frexp \in (0,1)$ the process $N_\frexp(t)$ admits a representation  into the form
	\begin{equation}
	N_\frexp(t)=\sum_{n \geq 0} X_n \mathds{1}_{\cS_{k,\frexp} \leq t<\cS_{k+1,\frexp}},
	\end{equation}
	where $(X_n,\cS_{n,\frexp})_{n \geq 0}$ is a Markov Renewal process. The transition probabilities are
	\begin{equation}\label{pijd}
	P[X_{k+1}=i+1|X_k=i]=\frac{\lambda_{i}}{\lambda_i+\mu_i}, \hspace{1cm} 	P[X_{k+1}=i-1|X_k=i]=\frac{\mu_i}{\lambda_i+\mu_i}
	\end{equation}
	and the inter-arrival times 
	\begin{equation}
	\Tau_{k,\frexp}=\cS_{k+1,\frexp}-\cS_{k,\frexp},
	\end{equation}
	conditioned to $\{X_i\}_{1 \leq i \leq k}$ are independent and they follow a  Mittag-Leffler distribution with parameter $\lambda_{i}+\mu_{i}$, i.e.
	\begin{equation}
	G_{i,j}(t)=1-\ML{-(\rate{i}) t^\frexp}.
	\end{equation}
\end{theorem}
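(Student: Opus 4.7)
The plan is to reduce the claim to the classical Markov--renewal representation of a birth-and-death process and propagate it through the subordination of Theorem~\ref{ChanTime}. On an enlarged probability space I realize $N_\frexp(t)=N_1(\Et{t})$, with $\Et{\cdot}$ the continuous, non-decreasing inverse of an independent $\frexp$-stable subordinator $\Dt{\cdot}$. I take as input the standard decomposition of $N_1$: its embedded jump chain $\{X_n\}_{n\geq 0}$ satisfies \eqref{pijd}, and its jump times $S_0<S_1<\cdots$ have increments $T_n=S_{n+1}-S_n$ that are, conditionally on $\{X_n\}$, independent and exponential with rate $\rate{X_n}$.

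Since $\Et{\cdot}$ is continuous, non-decreasing, and starts at $0$, the composition $t\mapsto N_1(\Et{t})$ jumps exactly when $\Et{t}$ reaches a successive $S_n$, and in the same order; hence the embedded chain of $N_\frexp$ is again $\{X_n\}$ and \eqref{pijd} follows for free. The $n$-th jump time of $N_\frexp$ is $\cS_{n,\frexp}=\inf\{t\geq 0:\Et{t}\geq S_n\}$, and the duality between $\Et{\cdot}$ and $\Dt{\cdot}$ gives $\cS_{n,\frexp}=\Dt{S_n^-}$. Because $S_n$ is, conditionally on the chain, the sum of independent exponentials and is independent of $\Dt{\cdot}$, almost surely the subordinator has no jump at $S_n$, so $\cS_{n,\frexp}=\Dt{S_n}$ a.s. The inter-arrival times are then $\Tau_{k,\frexp}=\Dt{S_{k+1}}-\Dt{S_k}$. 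Writing $r_i=\rate{i}$, the stationary independent increments of $\Dt{\cdot}$ and its independence from $N_1$ yield, conditionally on $\{X_n\}$, conditional independence of the $\Tau_{k,\frexp}$ with $\Tau_{k,\frexp}\stackrel{d}{=}\Dt{T_k}$ given $T_k$, and
\begin{align*}
P[\Tau_{k,\frexp}\leq t\mid X_k=i]
&=\int_0^{\infty}P[\Dt{s}\leq t]\,r_i e^{-r_i s}\,ds\\
&=\int_0^{\infty}P[\Et{t}\geq s]\,r_i e^{-r_i s}\,ds\\
&=1-E\bigl[e^{-r_i \Et{t}}\bigr]\\
&=1-\ML{-r_i t^{\frexp}},
\end{align*}
where the last step uses the Laplace identity $E[e^{-\lambda\Et{t}}]=\ML{-\lambda t^{\frexp}}$, which follows directly from the first formula of \eqref{LEBSTI}.

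Combined with the Markov property of the discrete chain $\{X_n\}$, these three facts deliver the Markov--renewal definition: conditional on $(X_i,\cS_{i,\frexp})_{i\leq n}$, the pair $(X_{n+1},\Tau_{n+1,\frexp})$ depends on the past only through $X_n$, with kernel $G_{i,j}(t)=1-\ML{-r_i t^{\frexp}}$ (independent of $j$). The step I expect to be the main technical obstacle is the pathwise identification $\cS_{n,\frexp}=\Dt{S_n}$, since Theorem~\ref{ChanTime} was stated only at the level of finite-dimensional distributions; I would circumvent this by defining $N_\frexp$ directly via the explicit time-change on a product space carrying independent copies of $N_1$ and $\Dt{\cdot}$, so that Theorem~\ref{ChanTime} pins down the law and the Markov--renewal structure is read off from this canonical construction.
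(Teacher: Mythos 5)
Your argument is correct, but it reaches the inter-arrival distribution by a genuinely different route than the paper. The paper stays entirely at the level of distributions: it obtains the semi-Markov property by integrating the $\frexp=1$ kernel against the density $h_\frexp(u,t)$, recovers the jump probabilities \eqref{pijd} from $\lim_{t\to\infty}\Et{t}=\infty$, and then computes the Laplace transform $\varphi_{n,\frexp}(s)=\int_0^\infty e^{-st}P[\cS_{n,\frexp}\le t]\,dt$, showing $\varphi_{n,\frexp}(s)=s^{\frexp-1}\varphi_{n,1}(s^\frexp)=\frac{1}{s}\prod_{i=0}^{n-1}\frac{\lambda_{X_i}+\mu_{X_i}}{s^{\frexp}+\lambda_{X_i}+\mu_{X_i}}$ and reading off the Mittag--Leffler factors from the product. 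You instead work pathwise on the canonical product space: you identify $\cS_{n,\frexp}=\Dt{S_n^-}=\Dt{S_n}$ a.s.\ (legitimate, since $S_n$ has a density and is independent of the countable jump set of $\Dt{\cdot}$), then exploit the stationary independent increments of the subordinator to get $\Tau_{k,\frexp}\stackrel{d}{=}\Dt{T_k}$ and the mixture computation $1-E[e^{-(\lambda_i+\mu_i)\Et{t}}]=1-\ML{-(\rate{i})t^{\frexp}}$. Your route buys a transparent explanation of \emph{why} the renewal structure survives the time change (the increments of $\Dt{\cdot}$ over the disjoint intervals $[S_k,S_{k+1}]$ are conditionally independent given the $T_k$), and it avoids inverting Laplace transforms; the paper's route never leaves the level of laws and so needs nothing beyond the finite-dimensional statement of Theorem~\ref{ChanTime} --- a point you correctly flag and resolve by taking the explicit time-changed version as the working copy of $N_\frexp$, which suffices because the theorem only asserts a representation in law. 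Two small repairs: the identity $E[e^{-\lambda\Et{t}}]=\ML{-\lambda t^{\frexp}}$ is the second formula of \eqref{LEBSTI} (equivalently \eqref{dPE}), not the first; and when asserting conditional independence of the $\Tau_{k,\frexp}$ given $\{X_n\}$ you should state explicitly that, conditionally on the whole sequence $(T_n)$, the subordinator increments over $[S_k,S_{k+1}]$ are independent because the $S_k$ are independent of $\Dt{\cdot}$.
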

\begin{proof}
 We first remark  that for $\frexp=1$ the theorem is valid. More precisely, conditioned to $\{X_i\}_{1 \leq i \leq k}$, the inter arrival times independent are  exponentially distributed with parameter $\rate{X_n}$. When $\frexp \in (0,1)$, from Theorem \ref{ChanTime} we know that $N_\frexp(t)=N_{1}(\Et{t})$ is a time changed process. Thus
	\begin{eqnarray*}
		P[X_{n+1}=j,\Tau_{n+1,\frexp} \leq t|(X_i,\cS_i), 1 \leq i \leq n]&=&\int_{0}^{\infty} P[X_{n+1}=j,\Tau_{n+1,1} \leq u|(X_i,\cS_i), 1 \leq i \leq n] \dPE\\
		&=&\int_{0}^{\infty}	P[X_{n+1}=j,\Tau_{n+1,1} \leq u|X_n] \dPE\\
		&=&P[X_{n+1}=j,\Tau_{n+1,\frexp} \leq t|X_n].
	\end{eqnarray*}	
The transition probabilities are obtained by using the fact that $\lim_{t \rightarrow \infty}\Et{t}=\infty$ with probability $1$
	\begin{eqnarray}
	p_{i,j}^\frexp&=&\lim_{t \rightarrow \infty } P[X_{n+1}=j,\Tau_{n+1,\frexp} \leq t|X_n]\\
	&=&\lim_{t \rightarrow \infty } P[X_{n+1}=j,\Tau_{n+1,1} \leq \Et{t}|X_n]\\
	&=&\lim_{u \rightarrow \infty } P[X_{n+1}=j,\Tau_{n+1,1} \leq u|X_n]\\
	&=& p_{i,j}.
	\end{eqnarray}
The distribution of the inter-arrival times is deduced inductively. Since $\SUMT{\frexp}=\sum_{i=0}^{n-1}\Tau_{i,\frexp}$, when $n=1$ we have
	\begin{eqnarray}
	P_i[\Tau_{0,\frexp}>t]&=&P[\cS_{1,\frexp}>\Et{t}]\\
	&=&\int_{0}^{\infty}P[\cS_{1,\frexp}>u]\dPE\\
	&=&\int_{0}^{\infty}e^{-(\rate{X_0}) u }\dPE\\
	&=&E[e^{-(\rate{X_0}) \Et{t} }]\\
	&=&\ML {-(\rate{X_0}) t^\frexp}.
	\end{eqnarray}
 For $n \geq 2$ we define
	\begin{equation}\label{accumprob}
	\varphi_{n,\frexp}(s)=\int_{0}^{\infty}	e^{-st}P\left[\SUMT{\frexp}\leq t\right]dt,
	\end{equation}
	which is the Laplace transform of $P\left[\SUMT{\frexp}\leq t\right]$. Since $N_\frexp(t)$ is a time changed process, it is fulfilled
	\begin{eqnarray}
	\varphi_{n,\frexp}(s)
	&=&\int_{0}^{\infty}	e^{-st}P\left[\SUMT{1}\leq \Et{t}\right]dt\\
	&=&\int_{0}^{\infty}	e^{-st}\left(\int_{0}^{\infty}P\left[\SUMT{1}\leq u\right]\dPE \right) dt,\label{LAPLTRa}\\
	\end{eqnarray}
	by using  the Fubini's theorem and recalling the identity 
	\begin{eqnarray}\label{TransLA}
	\int_{0}^{\infty}e^{-st} h_\frexp(u,t) dt=s^{\frexp-1}e^{-us^\frexp}
	\end{eqnarray}
	we get
	\begin{eqnarray}
	\varphi_{n,\frexp}(s)
	&=&\int_{0}^{\infty}	e^{-st}\left(\int_{0}^{\infty}P\left[\SUMT{1}\leq u\right]\dPE\right) dt\\
	&=&\int_{0}^{\infty}P\left[\SUMT{1}\leq u\right]	\left(\int_{0}^{\infty}e^{-st} \dPEb dt \right) du\\
	&=& \int_{0}^{\infty}P\left[\SUMT{1}\leq u\right]	s^{\frexp-1}e^{- s^\frexp u}du\label{PREINTPAR}\\
	&=&	s^{\frexp-1} \varphi_{n,1}(s^\frexp).
	\end{eqnarray}

	When $\frexp=1$ the inter-arrival times are independent and exponentially distributed, so	
	\begin{equation}
	\varphi_{n,1}(s^\frexp)=\frac{1}{s^\frexp}\prod_{i=0}^{n-1}\frac{\lambda_{X_i}+\mu_{X_i}}{s^ \frexp+\lambda_{X_i}+\mu_{X_i}}	,
	\end{equation}
	leading to the formula
	\begin{equation}\label{formula}
	\varphi_{n,\frexp}(s)=\frac{1}{s}\prod_{i=0}^{n-1}\frac{\lambda_{X_i}+\mu_{X_i}}{s^ \frexp+\lambda_{X_i}+\mu_{X_i}}.	
	\end{equation}	

	  The equation \eqref{formula} directly implies that for all $n \geq 2$, the random variable $\cS_{n,\frexp}$ is the sum of $n-1$ independent random variables with a Mittag- Leffler distribution. Consequently, conditioned to $X_n$, the inter-arrival times  $\Tau_{n,\frexp}=\cS_{n+1,\frexp}-\cS_{n,\frexp}$ are independent and they satisfies
	\begin{equation}
	P[\Tau_{n,\frexp}>t|X_n]=\ML{-(\lambda_{X_n}+\mu_{X_n})t^\frexp},
	\end{equation}
	concluding the proof.
\end{proof}
\section{The spectral representation of the transition probabilities}\label{REPSP}
\subsection{Preliminaries}
For all $i \geq 0$ fixed, we denote by
\begin{equation}
T_{i,\frexp}= \inf \{ t>0: N_{\frexp}(t)=i\},
\end{equation}
the first time that the process $N_{\frexp}(t)$ attains the state $i$. In particular, for $i=0$, we say that  $T_{0,\frexp}$ is the absorption time  or the extinction time of the process. For the sake of convenience, for $\frexp=1$ we write $T_0$ instead of $T_{0,1}$.

For $n \geq 1$ we define the coefficients
\begin{eqnarray}\label{pn}
\pi_1&=&1,\\ 
\pi_n&=&\prod_{i=1}^{n-1}\frac{\lambda_i}{\mu_{i+1}}, \hspace{0.5cm}n \geq 2.
\end{eqnarray}
Note that these coefficients satisfy the identity $\pi_{n+1}/\pi_{n}=\lambda_{n}/\mu_{n+1}$. This implies that the process is reversible with respect to the measure $\pi$, this is
\begin{equation}
\pi_{i}q_{i,j}=\pi_{j}q_{j,i}	\textrm{ for all } i,j \geq 1.
\end{equation}
The following series are essential to describe some properties of the process
\begin{equation}
\begin{array}{ll}
A=\displaystyle \sum_{i\geq 1}(\lambda_{i}\pi_{i})^{-1},& B=\displaystyle \sum_{i\geq 1}\pi_{i},\\C=\displaystyle \sum_{i\geq 1}(\lambda_{i}\pi_{i})^{-1}\sum_{j=1}^{i}\pi_{j},
&D=\displaystyle \sum_{i\geq 2}(\mu_{i}\pi_{i})^{-1}\sum_{j\geq i}\pi_{j}.
\end{array}
\end{equation}
When $\frexp=1$, some well-known results are (see for instance Chapter 5 of \cite{servet2012qsd})
\begin{itemize}
	\item[1)] The process is almost surely absorbed at zero, i.e. $P[T_0< \infty] =1$, if and only if $A=\infty$.
	\item[2)] The absorption time has a finite mean, i.e. $E_i[T_0]<\infty$ if and only if $B<\infty$.
	\item[3)] The process comes from infinity, i.e. $\sup_{i \geq 1}E_i[T_0]<\infty$ if and only if $D<\infty$. 
\end{itemize}
%It is not difficult to check  that $A=\infty \Rightarrow C=\infty$ and $B=\infty\Rightarrow D=\infty$. 

\subsection{Main Results}
Given a fixed $\theta > 0$, we define recursively the sequence of polynomials   
\begin{eqnarray}\label{recursive}
-\coef \Qt{i}&=&\mu_i \Qt{i-1}-(\lambda_i+\mu_i)\Qt{i}+\lambda_i \Qt{i+1},\hspace{0.5cm}i \geq 1,\\
&&\Qt{0}=0, \hspace{.5cm}\Qt{1}=1.
\end{eqnarray}
These polynomials satisfy the orthogonality condition
\begin{equation}
\pi_j \int_{ \theta^\star}^\infty\Qt{j}\Qt{k}d\Gamma(\coef)=\delta_{j,k},
\end{equation}
where $\Gamma$ is a  probability measure supported in $[\coef^\star,\infty)$, for some $\coef^\star \geq 0$ and $\delta_{j,k}$ is the Kronecker delta.

It is well known that when $\frexp=1$, the spectral representation of the transition probabilities is 
\begin{equation}\label{transi0}
\pijt=\pi_j \int_{ \theta^\star}^\infty e^{-t \coef}\Qt{i}\Qt{j}d\Gamma(\coef).
\end{equation}
The following theorem generalizes the representation \ref{transi0} to the complete case $\frexp \in (0,1]$.

\begin{theorem}\label{specrep}
The solution to the system of equations \eqref{BACKW} can be written as
\begin{equation}\label{solpkj}
\pijt=\pi_j \int_{ \theta^\star}^\infty\ML{-\coef t^\frexp}\Qt{i}\Qt{j}d\Gamma(\coef), \hspace{0.5cm} i \geq 1, j \geq 0,
\end{equation}
where $\ML{\cdot}$ is the Mittag-Leffler function with parameter $\frexp$.
\end{theorem}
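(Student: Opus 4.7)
The plan is to combine the time-change representation from Theorem \ref{ChanTime} with the known spectral representation \eqref{transi0} valid for $\frexp=1$. Writing $\pijt = E[p_{i,j,1}(\Et{t})] = \int_0^\infty p_{i,j,1}(u)\dPEb du$ and substituting \eqref{transi0} for $p_{i,j,1}(u)$, one formally obtains
\begin{equation}
\pijt = \pi_j \int_{\coef^\star}^\infty \Qt{i}\Qt{j}\left(\int_0^\infty e^{-u\coef}\dPEb du\right) d\Gamma(\coef),
\end{equation}
and the inner integral equals $\ML{-\coef t^\frexp}$ by the Laplace transform identity in \eqref{LEBSTI}, giving exactly \eqref{solpkj}.

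The one delicate point is the interchange of integrations, because the polynomials $\Qt{i}$ and $\Qt{j}$ are unbounded in $\coef$ and $\Gamma$ is only a probability measure. A clean way to bypass this is to verify the identity at the level of Laplace transforms in $t$. On the one hand, the computation leading to \eqref{LAPID} in Theorem \ref{ChanTime} shows
\begin{equation}
\int_0^\infty e^{-st}\pijt \, dt = s^{\frexp-1}\Phi_{i,j}(s^\frexp),
\end{equation}
and the classical spectral representation gives $\Phi_{i,j}(s^\frexp) = \pi_j \int_{\coef^\star}^\infty (s^\frexp+\coef)^{-1}\Qt{i}\Qt{j}\, d\Gamma(\coef)$. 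On the other hand, integrating the proposed right-hand side of \eqref{solpkj} against $e^{-st}dt$ and recalling that the Laplace transform of $\ML{-\coef t^\frexp}$ is $s^{\frexp-1}/(s^\frexp+\coef)$ (cf.\ \eqref{transf}) produces the same expression. In this reformulation Fubini is easy to justify: $s^{\frexp-1}/(s^\frexp+\coef) \leq s^{-1}$ uniformly in $\coef \geq \coef^\star$, and $\int |\Qt{i}\Qt{j}|\, d\Gamma \leq (\pi_i \pi_j)^{-1/2}$ by Cauchy--Schwarz together with the orthogonality relation.

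Uniqueness of the Laplace transform then yields \eqref{solpkj} for almost every $t>0$, and the continuity of both sides in $t$ (the left hand side from the integral representation in Theorem \ref{ChanTime}, the right hand side by dominated convergence with the bound $|\ML{-\coef t^\frexp}| \leq 1$) upgrades this to equality for every $t \geq 0$. The main obstacle, as this discussion shows, is not algebraic but analytic: it is the uniform-in-$\coef$ control needed to exchange the two integrations, which is precisely what the Laplace transform detour makes transparent.
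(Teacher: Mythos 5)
Your argument is correct, and its first half is exactly the paper's proof: the paper likewise writes $\pijt=\int_0^\infty p_{i,j,1}(u)\,\dPE$ via Theorem \ref{ChanTime}, inserts the classical representation \eqref{transi0}, invokes Fubini, and identifies the inner integral with $\ML{-\coef t^\frexp}$ through \eqref{LEBSTI}. Where you diverge is only in how the interchange of integrals is justified: the paper cites Fubini without comment, while you route the identity through the Laplace transform in $t$ and finish with uniqueness plus continuity. Your detour is sound --- the bounds $s^{\frexp-1}/(s^\frexp+\coef)\le s^{-1}$ and $\int_{\coef^\star}^\infty|\Qt{i}\Qt{j}|\,d\Gamma(\coef)\le(\pi_i\pi_j)^{-1/2}$ are both correct, the latter from Cauchy--Schwarz and the normalization $\pi_j\int\Qt{j}^2\,d\Gamma(\coef)=1$ --- but it is heavier than necessary: the very same Cauchy--Schwarz estimate, combined with $0\le e^{-\coef u}\le 1$ on $[\coef^\star,\infty)\times[0,\infty)$ and $\int_0^\infty \dPE=1$, already shows that the double integral converges absolutely, so the paper's direct application of Fubini is legitimate and no passage to transforms (nor the continuity upgrade from a.e.\ equality) is needed. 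In short, same decomposition and same key identities; your version trades a one-line integrability check for a Laplace-uniqueness argument, gaining no generality but making explicit the analytic hypothesis the paper leaves implicit. One cosmetic slip: you cite an equation label for the Laplace transform of the Mittag--Leffler function that does not exist in the paper (that formula appears there only as an unlabelled macro), so that cross-reference would not resolve.
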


\begin{proof}
From Theorem \ref{ChanTime},we know  that  $\pijt=p_{i,j,1}(\Et t)$ for all $i \geq 1$, $j \geq 0$. By using the equation \eqref{transi0}, we have for $j \geq 1$
\begin{eqnarray}
\pijt&=&P_i[N_1(\Et{t})=j]\\
&=&\pi_j \int_{0}^{\infty}P_i[N_1(u) =j]\dPE \\
&=&\pi_j \int_{0}^{\infty} \left(\int_{ \coef^\star}^{\infty} e^{-\coef u}\Qt{i}\Qt{j}d\Gamma(\coef)\right)\dPE.
\end{eqnarray}
By using the Fubini's Theorem 
\begin{equation*}
\pijt=\pi_j \int_{\coef^\star}^{\infty}\left(\int_{0}^{\infty} e^{-\coef u}\dPE\right)\Qt{i}\Qt{j}d\Gamma(\coef)
\end{equation*}
and recalling the identity
\begin{equation}\label{dPE}
\ML{-\coef t^\frexp}=\int_{0}^{\infty} e^{-\coef u}\dPE
\end{equation}
we deduce
\begin{equation}\label{TRANPROB}
\pijt=\pi_j \int_{\coef^\star}^{\infty}\ML{-\coef t^\frexp}\Qt{i}\Qt{j}d\Gamma(\coef),
\end{equation}
concluding the proof. 
\end{proof}
\begin{remark}

Since the Mittag-Leffler function satisfies $\derfra{\ML{-\coef t^\frexp}}=-\coef \ML{-\coef t^\frexp}$, we get from theorem \ref{specrep} and the dominated convergence theorem
\begin{equation}\
\derfra{\pijt}=\pi_j \int_{\coef^\star}^{\infty}-\coef{\ML{-\coef t^\frexp}}\Qt{i}\Qt{j}d\Gamma(\coef).
\end{equation}
By applying the recursive formula \eqref{recursive} to $-\coef \Qt{j}$ it follows
			\begin{eqnarray}\label{comp3b}
\derfra{\pijt}&=&\pi_j \int_{ \theta^\star}^\infty\Qt{i} {\ML{-\coef t^\frexp}}\left( \mu_{j} \Qt{j-1}-(\lambda_j+\mu_j)\Qt{j}+\lambda_{j} \Qt{j+1}\right) d\Gamma(\coef).
\end{eqnarray}
After some computations we have that the transition probabilities also satisfies the system of forward equations
\begin{eqnarray}\label{FORW}
\derfra{\pijt}&=&\FORW,\hspace{1cm} j \geq 1.
\end{eqnarray}
In particular, for $j=0$ 
\begin{eqnarray}\label{FRW0}
\derfra{\piot}&=&\mu_1 p_{i,1,\frexp}(t).
\end{eqnarray}
\end{remark}

The following Theorem states the distribution of the absorption time.
\begin{theorem}

For all $i \geq 1$, the probability of  non extinction is
\begin{equation}\label{ABSDIST}
	P_i[T_{0,\frexp}>t]=\mu_1 \int_{ \theta^\star}^\infty\ML{-\coef t^\frexp}\frac{\Qt{i}}{\coef}d\Gamma(\coef).
	\end{equation}	
\end{theorem}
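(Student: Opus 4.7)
The plan is to start from the identity
\[
P_i[T_{0,\frexp}>t] \;=\; \sum_{j\geq 1} \pijt,
\]
which holds because $\{T_{0,\frexp}>t\}=\{N_\frexp(t)\geq 1\}$ and $0$ is absorbing. Inserting the spectral representation from Theorem \ref{specrep} and exchanging the sum and the integral gives
\[
P_i[T_{0,\frexp}>t] \;=\; \int_{\theta^\star}^\infty \ML{-\coef t^\frexp}\,\Qt{i}\,S(\coef)\,d\Gamma(\coef),\qquad S(\coef):=\sum_{j\geq 1}\pi_j\Qt{j}.
\]
Comparing with the target formula \eqref{ABSDIST}, the entire proof then reduces to establishing the algebraic identity $S(\coef)=\mu_1/\coef$.

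To prove this identity I would multiply the three-term recursion \eqref{recursive} by $\pi_j$ and sum over $j\geq 1$. Using reversibility in the forms $\pi_j\lambda_j=\pi_{j+1}\mu_{j+1}$ and $\pi_j\mu_j=\pi_{j-1}\lambda_{j-1}$ (for $j\geq 2$), the shifted sums $\sum_{j\geq 1}\pi_j\mu_j\Qt{j-1}$ and $\sum_{j\geq 1}\pi_j\lambda_j\Qt{j+1}$ re-index into $\sum_{k\geq 1}\pi_k\lambda_k\Qt{k}$ and $\sum_{k\geq 2}\pi_k\mu_k\Qt{k}$ respectively, and almost everything cancels against $\sum_{j\geq 1}\pi_j(\lambda_j+\mu_j)\Qt{j}$. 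The only boundary term that survives comes from $j=1$ and evaluates, thanks to $\pi_1=1$ and $\Qt{1}=1$ (together with $\Qt{0}=0$), to $-\pi_1\mu_1\Qt{1}=-\mu_1$. Thus $-\coef\,S(\coef)=-\mu_1$, as desired.

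The main obstacle will be justifying the interchange of summation and integration, since $S(\coef)$ is an infinite series whose pointwise absolute convergence is not automatic. I would handle this by working first with the partial sums $\sum_{j=1}^{N}\pi_j\Qt{j}$: the telescoping argument above gives a clean closed form for the $N$-th partial sum plus a boundary remainder of the form $\pi_N\lambda_N(\Qt{N+1}-\Qt{N})/\coef$, which one controls $\Gamma$-a.e.\ using the orthogonality relation $\sum_{j\geq 1}\pi_j\Qt{j}^2 d\Gamma(\coef)<\infty$. Once this is in hand, dominated convergence (with the bound $|\ML{-\coef t^\frexp}|\leq 1$ for $\coef\geq 0$) justifies the exchange and completes the proof. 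As a sanity check, one may also verify \eqref{ABSDIST} by observing that both sides satisfy the fractional initial-value problem $\derfra{F_i(t)}=-\mu_1 p_{i,1,\frexp}(t)$ (from the forward equation \eqref{FRW0}) with $F_i(0)=1$; the initial condition for the right-hand side of \eqref{ABSDIST} at $t=0$ is precisely $\mu_1\int \Qt{i}/\coef\,d\Gamma(\coef)=1$, which follows from the same identity $S(\coef)=\mu_1/\coef$ combined with orthogonality.
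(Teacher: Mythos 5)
Your reduction to the algebraic identity $\sum_{j\geq 1}\pi_j\Qt{j}=\mu_1/\coef$ is where the argument breaks down. The telescoping computation is correct and gives, for the partial sums,
\[
\coef\sum_{j=1}^{N}\pi_j\Qt{j}=\mu_1-\lambda_N\pi_N\bigl(\Qt{N+1}-\Qt{N}\bigr),
\]
but you need the boundary term to vanish as $N\to\infty$ for $\Gamma$-almost every $\coef$, i.e.\ for $\coef$ in the \emph{support} of the spectral measure --- and that is exactly the regime where the orthogonal polynomials oscillate and the series need not converge. Orthogonality only gives $\int\Qt{N}^2\,d\Gamma(\coef)=1/\pi_N$, which after multiplication by $\lambda_N\pi_N$ yields no decay whatsoever. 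A concrete failure: for constant rates $\lambda_i=\mu_i=\lambda$ (absorption is certain, so the theorem applies), one has $\pi_j\equiv 1$ and $\Qt{j}=\sin(j\phi)/\sin\phi$ with $\coef=2\lambda(1-\cos\phi)$ on the support; the series $\sum_j\pi_j\Qt{j}$ does not converge (it is only Abel-summable to $\mu_1/\coef$) and the boundary term does not tend to zero. For the same reason the interchange $\sum_j\int=\int\sum_j$ cannot be justified by domination, since $\sum_j\pi_j|\Qt{j}|$ is not finite in general. So the route through $P_i[T_{0,\frexp}>t]=\sum_{j\geq1}\pijt$ followed by a pointwise evaluation of $\sum_j\pi_j\Qt{j}$ does not close as written; it would require a summability or smoothing argument you have not supplied.

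The paper avoids all of this by never summing over $j$: it takes the forward equation at the absorbing state, $\derfra{\piot}=\mu_1 p_{i,1,\frexp}(t)$, inserts the spectral representation only for $j=1$ (where $\pi_1=\Qt{1}=1$), solves the resulting fractional initial-value problem using $\derfra{\ML{-\coef t^\frexp}}=-\coef\ML{-\coef t^\frexp}$, and fixes the constant with the single normalization $\mu_1\int_{\theta^\star}^{\infty}\Qt{i}\coef^{-1}d\Gamma(\coef)=1$. Your closing ``sanity check'' is in fact precisely this argument, and it is the part of your proposal that can be promoted to a complete proof. Note, however, that the initial condition $\mu_1\int\Qt{i}\coef^{-1}d\Gamma(\coef)=1$ is the statement that absorption is certain (it equals $P_i[T_0<\infty]$); deriving it, as you suggest, from $\sum_j\pi_j\Qt{j}=\mu_1/\coef$ together with orthogonality would reintroduce the very identity that is not justified.
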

\begin{proof}
By using the equation \eqref{solpkj} for $j=1$
\begin{equation}
	p_{i,1,\frexp}(t)=\pi_i \int_{ \theta^\star}^\infty\ML{-\coef t^\frexp}\Qt{i}d\Gamma(\coef).
	\end{equation}
From equation \eqref{FRW0} and recalling that $\int_{ \theta^\star}^\infty\frac{\Qt{i}}{\coef}d\Gamma(\coef)=1$
	\begin{eqnarray}\label{pi0}
	p_{i,0,\frexp}(t)&=&-\mu_1 \int_{ \theta^\star}^\infty\left(\frac{\ML{-\coef t^\frexp}-1}{\theta}\right)\Qt{i}d\Gamma(\coef)\\
&=&1-\mu_1 \int_{ \theta^\star}^\infty\ML{-\coef t^\frexp}\frac{\Qt{i}}{\coef}d\Gamma(\coef)\label{PRABS}.
\end{eqnarray}	
Since $p_{i,0,\frexp}(t)=1-P_i[T_{0,\frexp}>t]$, the equation \eqref{ABSDIST} follows directly from \eqref{PRABS}.
\end{proof}

\section{Asymptotic Behavior and quasi-limiting distributions}\label{QLDB}
The theorem \eqref{solpkj} allows us to deduce some new results related to the quasi-limiting behavior. We start by defining the integrals
\begin{eqnarray}
C_{i,j,k}=\int_{ \theta^\star}^\infty\frac{\Qt{i}\Qt{j}}{\coef^k}d\Gamma(\coef),\hspace{1cm} i\geq 1, j \geq 1, k \geq 0.
\end{eqnarray}
When $\coef^\star>0$, the coefficients $C_{i,j,k}$ are finite. In fact
\begin{eqnarray}\label{cijk}
C_{i,j,k}&\leq&\frac{1}{(\coef^\star)^k}\left(\int_{ \theta^\star}^\infty{\Qt{i}^2}d\Gamma(\coef)\right)^{1/2} \left(\int_{ \theta^\star}^\infty{\Qt{j}^2}d\Gamma(\coef)\right)^{1/2} <\infty.
\end{eqnarray}
 The following proposition states the asymptotic behavior of both the transition probabilities and the distribution of the absorption time $T_{0,\frexp}$. 

\begin{proposition}\label{limit1}
Assume $\coef^\star>0$. For all $0<\frexp<1$ the following limits are fulfilled
	\begin{eqnarray}
	\lim_{t \rightarrow \infty} t^{\frexp}P_i[T_{0,\frexp}>t]&=&\frac{\mu_1}{\Gamma(1-\frexp)}\int_{ \theta^\star}^\infty\frac{\Qt{i}}{\coef^2}d\Gamma(\coef),\label{ABSTIM}\\
	\lim_{t \rightarrow \infty} t^{\frexp}\pijt&=&\frac{\pi_j}{\Gamma(1-\frexp)}\int_{ \theta^\star}^\infty\frac{\Qt{i}\Qt{j}}{\coef}d\Gamma(\coef)\label{ABSTIM2}.
	\end{eqnarray}
\end{proposition}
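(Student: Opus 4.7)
The plan is to apply the dominated convergence theorem directly to the spectral integrals \eqref{ABSDIST} and \eqref{solpkj}, once each side has been multiplied by $t^{\frexp}$. First I would invoke the classical large-argument expansion of the Mittag--Leffler function with index $\frexp\in(0,1)$: as $z\to\infty$,
\begin{equation*}
\ML{-z}=\frac{1}{\Gamma(1-\frexp)\,z}+O(z^{-2}),
\end{equation*}
and specialize $z=\coef t^{\frexp}$ to obtain the pointwise limit
\begin{equation*}
t^{\frexp}\ML{-\coef t^{\frexp}}\;\longrightarrow\;\frac{1}{\Gamma(1-\frexp)\,\coef}\qquad\text{as }t\to\infty,\ \coef>0.
\end{equation*}
This is exactly the kernel appearing on the right-hand sides of \eqref{ABSTIM} and \eqref{ABSTIM2}, so the whole task reduces to justifying the exchange of the limit and the $\Gamma$-integral.

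Next I would produce an integrable dominant. Continuity of $\ML{-\cdot}$ on $[0,\infty)$ together with the asymptotic above yields a uniform bound $\ML{-z}\leq M/(1+z)$ for all $z\geq 0$, with $M=M(\frexp)>0$. Substituting $z=\coef t^{\frexp}$ gives
\begin{equation*}
t^{\frexp}\ML{-\coef t^{\frexp}}\leq\frac{M\,t^{\frexp}}{1+\coef t^{\frexp}}=\frac{M}{t^{-\frexp}+\coef}\leq\frac{M}{\coef}\leq\frac{M}{\coef^{\star}},
\end{equation*}
valid for every $t>0$ and every $\coef\geq\coef^{\star}$. Multiplying against the two spectral integrands yields dominants proportional to $|\Qt{i}|/\coef^{2}$ for \eqref{ABSDIST} and to $|\Qt{i}\Qt{j}|/\coef$ for \eqref{solpkj}. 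Both are $\Gamma$-integrable on $[\coef^{\star},\infty)$ by the same Cauchy--Schwarz argument already displayed in \eqref{cijk}, using the orthogonality relation $\pi_{j}\int\Qt{j}^{2}\,d\Gamma=1$ together with the assumption $\coef^{\star}>0$.

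With pointwise convergence and an integrable dominant in hand, dominated convergence transfers the limit inside the integral and produces \eqref{ABSTIM} and \eqref{ABSTIM2}. The most delicate ingredient is the Mittag--Leffler bound $\ML{-z}\leq M/(1+z)$; this is classical, but it needs justification, either via the complete monotonicity of $\ML{-\cdot}$ (equivalently, Pollard's integral representation) combined with the leading asymptotic, or via a direct estimate on the defining series. I would also stress that the hypothesis $\coef^{\star}>0$ is essential: if $\coef^{\star}=0$, the dominants $1/\coef^{2}$ and $1/\coef$ become singular at the origin, dominated convergence breaks down, and the decay rate of both $P_i[T_{0,\frexp}>t]$ and $\pijt$ is no longer $t^{-\frexp}$ but is instead dictated by the local behaviour of $\Gamma$ near $0$.
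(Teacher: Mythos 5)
Your proof is correct and follows the same basic route as the paper: establish the pointwise limit $t^{\frexp}\ML{-\coef t^{\frexp}}\to \frac{1}{\Gamma(1-\frexp)\coef}$ from the large-argument Mittag--Leffler asymptotics, then pass the limit through the spectral integrals of Theorem \ref{specrep} and equation \eqref{ABSDIST}. The one substantive difference is in how the interchange of limit and integral is justified. The paper invokes the \emph{monotone} convergence theorem, which is not literally applicable here: the polynomials $\Qt{i}$ change sign on the support of $\Gamma$, so the integrand is not nonnegative, and $t^{\frexp}\ML{-\coef t^{\frexp}}$ is not obviously monotone in $t$ (for $\frexp=1$ the analogue $t e^{-\coef t}$ is not monotone). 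Your dominated convergence argument, built on the uniform bound $\ML{-z}\leq M/(1+z)$ so that $t^{\frexp}\ML{-\coef t^{\frexp}}\leq M/\coef$ for all $t>0$ and $\coef\geq\coef^{\star}$, combined with the Cauchy--Schwarz integrability of $|\Qt{i}\Qt{j}|/\coef^{k}$ already recorded in \eqref{cijk}, is the cleaner and strictly correct justification. You rightly flag that the bound $\ML{-z}\leq M/(1+z)$ needs a citation or a short argument (complete monotonicity plus the leading asymptotic suffices, as you note), and your observation that the hypothesis $\coef^{\star}>0$ is what keeps the dominants $1/\coef$ and $1/\coef^{2}$ integrable matches the role this hypothesis plays in the paper.
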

\begin{proof}

For all $\coef>0$ the following limit applies
	\begin{eqnarray}
	\lim_{t \rightarrow \infty} t^\frexp{E_{\frexp,1}}(-\coef t^\frexp)&=&\frac{1}{\coef}\frac{1}{\Gamma(1-\frexp)}.
	\end{eqnarray}
	In addition, we know that  $\int_{\theta^\star}^{\infty}\frac{\Qt{j}}{\coef^k}d\Gamma(\coef)<\infty$ for all $k \geq 1$. The limit \eqref{ABSTIM} follows from the monotone convergence theorem

	\begin{eqnarray}
	\lim_{t \rightarrow \infty} t^\frexp P_i[T_{0,\frexp}>t]&=&\mu_1 \int_{ \theta^\star}^\infty\left(\lim_{t \rightarrow \infty} t^\frexp \ML{-\coef t^\frexp}\right)\frac{\Qt{i}}{\coef}d\Gamma(\coef)\\
	&=&\frac{\mu_1}{\Gamma(1-\frexp)}\int_{ \theta^\star}^\infty \frac{\Qt{i}}{\coef^2}d\Gamma(\coef).
	\end{eqnarray}	
The limit \eqref{ABSTIM2} is proved by using the same argument.
\end{proof}
The proposition \ref{limit1} leads us to the following theorem, interpreted as a Yaglom limit for the fractional case.
\begin{theorem}\label{QLD1}

Assume $\coef^\star>0$. For all $0<\frexp<1$ we have
\begin{equation}
\lim_{t \rightarrow \infty}{P_i[N_\frexp(t)=j|T_{0,\frexp}>t]}=\frac{\PP_{i,j}}{\sum_{n \geq 1} \PP_{i,j}}\label{YAGLIM3},
\end{equation}
where
\begin{eqnarray}\label{defPP}
\PP_{i,n}&=&\pi_n\left(\PP_{i,1}+\sum_{j=1}^{\min\{i,n-1\}}\frac{1}{\lambda_j \pi_j}\right), \hspace{1cm}n\geq 2,\nonumber\\
\PP_{i,1}&=&\frac{1}{\mu_1}.
\end{eqnarray}

\end{theorem}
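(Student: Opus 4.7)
The plan is to apply the asymptotic equivalents of Proposition \ref{limit1} and to recognise the resulting ratio as the normalised Green's function of the classical ($\alpha=1$) birth and death chain.

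First, I would write the conditional probability as $P_i[N_\alpha(t)=j\mid T_{0,\alpha}>t]=\pijt/P_i[T_{0,\alpha}>t]$ and take the ratio of the two limits supplied by Proposition \ref{limit1}. Both the numerator and the denominator decay like $t^{-\alpha}/\Gamma(1-\alpha)$, so that common factor cancels and
\[
\lim_{t\to\infty}P_i[N_\alpha(t)=j\mid T_{0,\alpha}>t]=\frac{\pi_j\,C_{i,j,1}}{\mu_1\,C_{i,1,2}}.
\]

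Second, I would identify both integrals probabilistically. By Fubini and the spectral representation \eqref{transi0} at $\alpha=1$,
\[
\pi_j\,C_{i,j,1}=\int_0^\infty p_{i,j,1}(u)\,du =: G_{i,j},
\]
the Green's function of the classical chain killed at $0$; likewise $\mu_1\,C_{i,1,2}=\int_0^\infty P_i[T_0>t]\,dt=E_i[T_0]=\sum_{n\geq 1}G_{i,n}$. Hence the limit equals $G_{i,j}/\sum_n G_{i,n}$, and the theorem reduces to exhibiting a closed form for $G_{i,j}$ matching \eqref{defPP}.

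Third, I would compute $G_{i,j}$ explicitly. Integrating the classical backward equation from $0$ to $\infty$ (using almost-sure absorption, a consequence of $\theta^\star>0$) gives the three-term recurrence $\mu_i G_{i-1,j}-(\lambda_i+\mu_i)G_{i,j}+\lambda_i G_{i+1,j}=-\delta_{i,j}$ with $G_{0,j}=0$ and $G_{i,j}\to 0$ as $i\to\infty$. The associated homogeneous recursion admits the two independent solutions $y_i\equiv 1$ and $y_i=M_i:=\psi_i(0)=1+\mu_1\sum_{k=1}^{i-1}(\lambda_k\pi_k)^{-1}$. Writing $G_{\cdot,j}$ as a different linear combination of these solutions on each side of $i=j$, imposing $G_{0,j}=0$ on the left branch, boundedness on the right, continuity at $i=j$, and matching the delta jump at $i=j$, I recover the piecewise formula for $P_{i,j}$ displayed in \eqref{defPP}, after invoking the reversibility identity $\mu_{i+1}\pi_{i+1}=\lambda_i\pi_i$ to simplify the constants.

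The main obstacle is this last step. Inverting the three-term recurrence against a delta source always requires careful bookkeeping, and the jump computation at $i=j$ relies on the identity $M_i-M_{i-1}=\mu_1/(\mu_i\pi_i)$, itself a consequence of the reversibility of $\pi$. Selecting the correct decaying branch on $i\geq j$ (the constant rather than $M_i$) also uses that $\sum(\lambda_k\pi_k)^{-1}=\infty$, which under the standing hypothesis $\theta^\star>0$ holds and is precisely what forces the coefficient of $M_i$ on the right branch to vanish.
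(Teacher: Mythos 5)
Your proposal is correct, and its first two steps coincide with the paper's: both take the ratio of the two limits in Proposition \ref{limit1} and then identify $\pi_j C_{i,j,1}=\int_0^\infty p_{i,j,1}(u)\,du$ and $\mu_1 C_{i,1,2}=\int_0^\infty P_i[T_0>u]\,du$ via Fubini applied to the spectral representation \eqref{transi0}. Where you genuinely diverge is the computation of the Green's function $G_{i,j}$. The paper integrates the \emph{forward} equations over $t$, obtaining a three-term recurrence in the second index $j$ with source $-\delta_{i,j}$, and then telescopes it twice: summing over $j$ reduces it to the first-order relation $\mathds{1}_{n<i}=\mu_{n+1}G_{i,n+1}-\lambda_n G_{i,n}$, which is solved directly using $G_{i,0}=0$ and $G_{i,1}=1/\mu_1$ (the latter coming from the $j=0$ equation), with no boundary condition at infinity ever needed. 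You instead integrate the \emph{backward} equations, obtaining a recurrence in the first index $i$, and solve the resulting boundary-value problem with the two homogeneous solutions $1$ and $M_i=\psi_i(0)$ plus a jump condition at $i=j$; this is a clean Sturm--Liouville-type construction and yields the same $G_{i,j}=\frac{\pi_j}{\mu_1}M_{\min(i,j)}$, but it costs you two extra inputs: the divergence $\sum_k(\lambda_k\pi_k)^{-1}=\infty$ (which, as you say, does follow from $\theta^\star>0$ since then $E_i[T_0]<\infty$, forcing almost-sure absorption) and a justification that $G_{i,j}$ stays bounded in $i$. On that last point there is a small inaccuracy in your write-up: $G_{i,j}$ does \emph{not} tend to $0$ as $i\to\infty$ --- it is constant, equal to $G_{j,j}$, for all $i\ge j$, since starting above $j$ the chain must pass through $j$ before absorption --- so the correct boundary condition is boundedness (which is what your construction actually uses, and which the strong Markov bound $G_{i,j}\le G_{j,j}$ supplies in one line). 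With that repaired, your derivation reproduces \eqref{defPP} exactly as stated up to the paper's own off-by-one in the upper summation limit (the telescoping of $\mathds{1}_{j<i}$ gives $\min\{i-1,n-1\}$, consistent with your $M_{\min(i,n)}$ and with the paper's remark that the correction term vanishes for $i=1$).
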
 

\begin{remark}
Clearly, the limit \ref{defPP} strongly depends on the initial condition $i$. We analyze in more detail the two extreme cases:
\begin{description}
	\item For $i=1$, the second term in the equation \eqref{defPP} vanishes, so
	\begin{equation}
	\lim_{t \rightarrow \infty}{P_i[N_\frexp(t)=j|T_{0,\frexp}>t]}=\frac{\pi_j}{\sum_{n \geq 1} \pi_n}\label{YAGLIM3sp}.
	\end{equation}
	\item For $i \rightarrow \infty$ (assuming that the limit exists) we get
	\begin{equation}
	\lim_{i \rightarrow \infty}\lim_{t \rightarrow \infty}{P_i[N_\frexp(t)=j|T_{0,\frexp}>t]}=\frac{\pi_n\left(\frac{1}{\mu_1}+\sum_{j=1}^{n-1}\frac{1}{\lambda_j \pi_j}\right)}{\sum_{n \geq 1}\pi_n\left(\frac{1}{\mu_1}+\sum_{j=1}^{n-1}\frac{1}{\lambda_j \pi_j}\right)}.
	\end{equation}
\end{description} 
The condition $\sum_{n \geq 1}\pi_n <\infty$ is equivalent to the almost sure absorption of the process, whereas the condition $\sum_{n \geq 1}\pi_n\sum_{j=1}^{n-1}\frac{1}{\lambda_j \pi_j}<\infty$ implies that the process comes down from infinity (according to  Theorem 3.1 of Van Doorn \cite{Doorn91} this   is equivalent to the existence of a unique quasi-stationary distribution).
It is interesting to notice that in the fractional model, the quasi-limiting behavior changes drastically compared to the Markovian case. Nevertheless, as we will see in the next section, the quasi-stationary distributions are the same in the fractional model. 
\end{remark}

\begin{proof}[Proof of Theorem \ref{QLD1}]

As a direct consequence of proposition \ref{limit1} we have
	\begin{equation}
\lim_{t \rightarrow \infty}{P_i[N_\frexp(t)=j|T_{0,\frexp}>t]}=\frac{\pi_j}{\mu_1}\frac{C_{i,j,1}}{C_{i,1,2}}\label{YAGLIM}.
\end{equation}
Since $p_{i,j,1}(t)=E_{i}[N_1(t)=j,T_0>t]$ , we get from the equation \eqref{transi0}
\begin{equation}\label{transi}
P_{i}[N_1(t)=j,T_0>t]=\pi_j \int_{\coef^\star}^{\infty}e^{-t \coef}\Qt{i}\Qt{j}d\Gamma(\coef).
\end{equation}
By taking the integral  over $t \geq 0$, the Fubini's Theorem yields to 
\begin{eqnarray}\label{lim1}
\int_{0 }^{\infty}P_{i}[N_1(t)=j,T_0>t]dt&=&\pi_j \int_{ 0}^{\infty}\int_{\coef^\star}^{\infty}e^{-t \coef}\Qt{i}\Qt{j}d\Gamma(\coef)dt\\
&=&\pi_j \int_{\coef^\star}^{\infty}\frac{\Qt{i}\Qt{j}}{\theta}d\Gamma(\coef),
\end{eqnarray}
analogously
\begin{eqnarray}\label{lim2}
\int_{0}^{\infty}P_{i}[T_0>t]dt&=&\mu_1 \int_{\coef^\star}^{\infty}\frac{\Qt{i}}{\theta^2}d\Gamma(\coef).
\end{eqnarray}
So that the limit \eqref{YAGLIM} is equivalent to
\begin{equation}
\lim_{t \rightarrow \infty}{P_i[N_\frexp(t)=j|T_{0,\frexp}>t]}=\frac{\int_{ 0}^{\infty}P_i[\mathds{1}_{N_1(t)=j,T_0>t}]dt}{\int_{ 0}^{\infty}P_i[\mathds{1}_{T_0>t}]dt}\label{YAGLIMalt}.
\end{equation}
 We recall the system of equations \eqref{FORW} for $\frexp=1$
\begin{eqnarray}\label{eqa1}
p^{\prime}_{i,j}(t)&=&\FORW,\hspace{1cm} j \geq 1.
\end{eqnarray}
By taking the  integral over $t \geq 0$, from the right hand of the  equation \eqref{eqa1} we obtain
\begin{eqnarray}
\int_{0}^{\infty}p^{\prime}_{i,j}(t) dt&=&\lim_{t \rightarrow \infty }p_{i,j}(t)-p_{i,j}(0)\\
&=&-\delta_{i,j}.
\end{eqnarray}
When $j=0$, from equation \eqref{FRW0} we get 
$$\int_{0}^{\infty }E_{i}[N_1(t)=1,T_0>t]=\frac{1}{\mu_1}.$$
By introducing   the notation
\begin{equation}
\Pij=\int_{0}^{\infty}E_{i}[N_1(t)=j,T_0>t]dt,
\end{equation}
with the convention $\PP_{i,0}=0$, we get the recurrence formula
\begin{eqnarray}
-\delta_{i,j}&=&\FORWL,
\end{eqnarray}
whose solution we can be computed explicitly. Let us re-arrange some terms
\begin{eqnarray}
-\delta_{i,j}&=&\FORWLB,
\end{eqnarray}
by taking the sum over $1 \leq j \leq n$
\begin{eqnarray}\label{eqint}
-\sum_{j=1}^n \delta_{i,j}&=&\lambda_0 P_{i,0}-\lambda_n \PP_{i,n}+\mu_{n+1}\PP_{i,n+1}-\mu_1P_{i,1},\hspace{1cm} j \geq 1.
\end{eqnarray}
Since $P_{i,0}=0$ and $P_{i,1}=\frac{1}{\mu_1}$, we get
\begin{eqnarray}\label{recur2}
1-\sum_{j=1}^n \delta_{i,j}&=&-\lambda_n \PP_{i,n}+\mu_{n+1}\PP_{i,n+1}.
\end{eqnarray}
We notice that
\begin{equation}
1-\sum_{j=1}^n \delta_{i,j}=\left\{ \begin{array}{ccc}
0& \textrm{if }n \geq i\\
1 &\textrm{if }n< i
\end{array}
\right.,
\end{equation}
the equation  \eqref{recur2} becomes
\begin{eqnarray}\label{recur3}
\mathds{1}_{n<i}&=&\mu_{n+1}\PP_{i,n+1}-\lambda_n \PP_{i,n}.
\end{eqnarray}
Recalling the identity $\frac{\mu_{n+1}}{\lambda_n \pi_n}=\frac{1}{\pi_{n+1}}$, we get now
\begin{eqnarray}\label{recur3b}
\frac{\mathds{1}_{n<i}}{\lambda_n \pi_n}&=&\frac{\PP_{i,n+1}}{\pi_{n+1}}-\frac{\PP_{i,n}}{\pi_{n}},
\end{eqnarray}
whose solution is
\begin{eqnarray}
\PP_{i,n}&=&\pi_n\left(\PP_{i,1}+\sum_{j=1}^{n-1}\frac{\mathds{1}_{j<i}}{\lambda_j \pi_j}\right), \hspace{1cm}n\geq 2,\\
\PP_{i,1}&=&\frac{1}{\mu_1}.
\end{eqnarray}
For $n \geq 2$, this is the same as
\begin{eqnarray}\label{defPP2}
\PP_{i,n}&=&\pi_n\left(\frac{1}{\mu_1}+\sum_{j=1}^{\min\{i,n-1\}}\frac{1}{\lambda_j \pi_j}\right),
\end{eqnarray}
so that the limit  is 
\begin{equation}
\lim_{t \rightarrow \infty}{P_i[N_\frexp(t)=j|T_{0,\frexp}>t]}=\frac{\PP_{i,j}}{\sum_{n \geq 1} \PP_{i,j}},
\end{equation}
concluding the proof.
\end{proof}
%Clearly, the above limit strongly depends on the initial condition $i$. We analyze in more detail the two extreme cases:
%\begin{description}
%	\item For $i=1$, the second term in the equation \eqref{defPP} vanishes, so
%	\begin{equation}
%	\lim_{t \rightarrow \infty}{P_i[N_\frexp(t)=j|T_{0,\frexp}>t]}=\frac{\pi_j}{\sum_{n \geq 1} \pi_n}\label{YAGLIM3sp}.
%	\end{equation}
%	\item For $i \rightarrow \infty$ (assuming that the limit exists) we get
%	\begin{equation}
%	\lim_{i \rightarrow \infty}\lim_{t \rightarrow \infty}{P_i[N_\frexp(t)=j|T_{0,\frexp}>t]}=\frac{\pi_n\left(\frac{1}{\mu_1}+\sum_{j=1}^{n-1}\frac{1}{\lambda_j \pi_j}\right)}{\sum_{n \geq 1}\pi_n\left(\frac{1}{\mu_1}+\sum_{j=1}^{n-1}\frac{1}{\lambda_j \pi_j}\right)}.
%	\end{equation}
%\end{description} 
%The condition $\sum_{n \geq 1}\pi_n <\infty$ is equivalent to the almost sure absorption of the process, whereas the condition $\sum_{n \geq 1}\pi_n\sum_{j=1}^{n-1}\frac{1}{\lambda_j \pi_j}<\infty$ implies that the process comes down from infinity (according to  Theorem 3.1 of Van Doorn \cite{Doorn91} this   is equivalent to the existence of a unique quasi-stationary distribution).
%It is interesting to notice that in the fractional model, the quasi-limiting behavior changes drastically compared to the Markovian case. Nevertheless, as we will see in the next section, the quasi-stationary distributions are the same in the fractional model. 

The following Theorem provides the convergence rate of the limit obtained in \eqref{YAGLIM}.

\begin{theorem}
	
	For all $0 < \frexp <1$, $\frexp \neq 1/2$ we have
	\begin{equation}
	\lim_{t \rightarrow \infty}	t^\frexp\left(\frac{\pijt}{P_i[T_{0,\frexp}>t]}-\QLD\right) =\QLD \frac{\Gamma(1-\frexp)}{\Gamma(1-2\frexp)}\left(\frac{C_{i,1,3}}{C_{i,1,2}}-\frac{C_{i,j,2}}{C_{i,j,1}}\right)\label{YAGLIM2},
	\end{equation}
	similarly for $\frexp=1/2$
	\begin{equation}
	\lim_{t \rightarrow \infty}	t^{}\left(\frac{\pijt}{P_i[T_{0,\frexp}>t]}-\QLD\right) =\frac{1}{2}\QLD\left(\frac{C_{i,1,4}}{C_{i,1,2}}-\frac{C_{i,j,3}}{C_{i,j,1}}\right)\label{YAGLIM2b}.
	\end{equation}
\end{theorem}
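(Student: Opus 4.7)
The plan is to extract an asymptotic expansion of both $\pijt$ and $P_i[T_{0,\frexp}>t]$ from the spectral formulas in Theorem \ref{specrep} and equation \eqref{ABSDIST}, form the quotient, and identify the first correction beyond $\QLD$. The sole ingredient from the fractional framework is the classical large-argument asymptotic expansion
\[
E_{\frexp,1}(-z) = \sum_{k=1}^{N} \frac{(-1)^{k+1}}{z^k\,\Gamma(1-\frexp k)} + O(z^{-(N+1)}) \qquad \text{as } z\to\infty,
\]
with the convention that the $k$-th coefficient vanishes whenever $1-\frexp k\in\{0,-1,-2,\dots\}$. The pointwise remainder bound $|E_{\frexp,1}(-z)-(\text{partial sum})|\le C_N/(1+z)^{N+1}$ is uniform on $[\coef^\star t^\frexp,\infty)$, which is what allows the next step.

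Substituting this expansion into \eqref{solpkj} and \eqref{ABSDIST} and exchanging the finite sum with the integral against $d\Gamma(\coef)$, I obtain
\begin{align*}
\pijt &= \frac{\pi_j C_{i,j,1}}{t^\frexp\Gamma(1-\frexp)} - \frac{\pi_j C_{i,j,2}}{t^{2\frexp}\Gamma(1-2\frexp)} + \frac{\pi_j C_{i,j,3}}{t^{3\frexp}\Gamma(1-3\frexp)} + o(t^{-3\frexp}),\\
P_i[T_{0,\frexp}>t] &= \frac{\mu_1 C_{i,1,2}}{t^\frexp\Gamma(1-\frexp)} - \frac{\mu_1 C_{i,1,3}}{t^{2\frexp}\Gamma(1-2\frexp)} + \frac{\mu_1 C_{i,1,4}}{t^{3\frexp}\Gamma(1-3\frexp)} + o(t^{-3\frexp}),
\end{align*}
where I use $\Qt{1}\equiv 1$ to rewrite $\Qt{i}/\coef^k = \Qt{i}\Qt{1}/\coef^k$ and produce the coefficients $C_{i,1,\cdot}$. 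The interchange is legitimate because the remainder bound combined with the Cauchy--Schwarz estimate \eqref{cijk} (which is where $\coef^\star>0$ is essential) makes the integrated error $o(t^{-N\frexp})$.

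Factoring $\QLD = \pi_j C_{i,j,1}/(\mu_1 C_{i,1,2})$ out of the quotient, I write
\[
\frac{\pijt}{P_i[T_{0,\frexp}>t]} = \QLD\cdot\frac{1+u(t)}{1+v(t)} = \QLD\bigl[1 + (u(t)-v(t)) + O((|u|+|v|)^2)\bigr],
\]
with $u,v\to 0$. For $\frexp\neq 1/2$ the coefficient $1/\Gamma(1-2\frexp)$ is finite and nonzero, so $u(t)-v(t) = \bigl(C_{i,1,3}/C_{i,1,2}-C_{i,j,2}/C_{i,j,1}\bigr)\Gamma(1-\frexp)/(t^\frexp\Gamma(1-2\frexp))+o(t^{-\frexp})$, and multiplying by $t^\frexp$ yields \eqref{YAGLIM2}. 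For $\frexp=1/2$ the $k=2$ terms vanish since $\Gamma(0)$ is a pole; the leading correction is governed by the $k=3$ terms, and with $\Gamma(1-\frexp)=\sqrt{\pi}$, $\Gamma(1-3\frexp)=\Gamma(-1/2)=-2\sqrt{\pi}$ the effective algebraic constant is $\Gamma(1-\frexp)/\Gamma(1-3\frexp)=-1/2$, the scaling becomes $t^{2\frexp}=t$, and careful accounting of signs (the alternating factor $(-1)^{k+1}$ is $+1$ at $k=3$) delivers \eqref{YAGLIM2b}.

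The main obstacle is the uniformity of the Mittag-Leffler remainder under the spectral measure: pointwise the asymptotic expansion is standard, but integrating against $d\Gamma(\coef)$ requires that the error be dominated by an integrable function independent of $t$. This is where the hypothesis $\coef^\star>0$ plays its role, via the Cauchy--Schwarz bound \eqref{cijk} and the finiteness of $C_{i,j,k}$ for all $k\ge 0$; once this is in place, the remaining content of the theorem is algebraic manipulation and a case split on the location of the gamma-function pole.
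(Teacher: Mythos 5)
Your proposal is correct and follows essentially the same route as the paper: substitute the large-argument Mittag-Leffler expansion into the spectral formulas for $\pijt$ and $P_i[T_{0,\frexp}>t]$, expand the quotient around $\QLD$, and treat $\frexp=1/2$ separately because the $\Gamma(1-2\frexp)$ term drops out at the pole. Your explicit attention to the uniformity of the remainder under $d\Gamma(\coef)$ (via $\coef^\star>0$ and the finiteness of the $C_{i,j,k}$) is a point the paper passes over more quickly, but it is the same argument.
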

\begin{proof}
	For $\frexp\neq 1/2$ we  recall the asymptotic expansion of the Mittag-Leffler function for $t$ large enough (see equation \eqref{ASYM} from the appendix)
	\begin{eqnarray}
	\ML{-\coef t^\frexp }&=&\frac{1}{\Gamma(1-\frexp)}\frac{1}{\coef t^\frexp}-\frac{1}{\Gamma(1-2\frexp)}\frac{1}{\theta^2t^{2\frexp}}+O(t^{-3\frexp }).
	\end{eqnarray}
	The constants $C_{i,j,k}$ are finite, so that the following asymptotic expansions are valid 
	\begin{eqnarray}
	\pijt&=& \pi_j \left( \frac{C_{i,j,1}}{\Gamma(1-\frexp)}\frac{1}{t^\frexp}-\frac{C_{i,j,2}}{\Gamma(1-2\frexp)}\frac{1}{t^{2\frexp}}+o(t^{2\frexp})\right),\\
	P_i[T_{0,\frexp}>t]&=& \mu_1 \left( \frac{C_{i,1,2}}{\Gamma(1-\frexp)}\frac{1}{t^\frexp}-\frac{C_{i,1,3}}{\Gamma(1-2\frexp)}\frac{1}{t^{2\frexp}}+o(t^{2\frexp})\right),
	\end{eqnarray}
	and after some algebraic manipulations
	\begin{eqnarray}
	\frac{\pijt}{P_i[T_{0,\frexp}>t]}&=&\frac{\pi_j}{\mu_1}\frac{C_{i,j,1}}{C_{i,1,2}}\left( \frac{1-\frac{C_{i,j,2}}{C_{i,j,1}}\frac{\Gamma(1-\frexp)}{\Gamma(1-2\frexp)}\frac{1}{t^\frexp}+o(t^{-\frexp})}{1-\frac{C_{i,1,3}}{C_{i,1,2}}\frac{\Gamma(1-\frexp)}{\Gamma(1-2\frexp)}\frac{1}{t^\frexp}+o(t^{-\frexp})}\right).
	\end{eqnarray}	
	For $|z|$ small enough we know that $(1-z)^{-1}=1+z+o(z)$, so
	\begin{eqnarray}
	\frac{\pijt}{P_i[T_{0,\frexp}>t]}&=&\frac{\pi_j}{\mu_1}\frac{C_{i,j,1}}{C_{i,1,2}}\left( 1-\frac{1}{t^\frexp}\frac{\Gamma(1-\frexp)}{\Gamma(1-2\frexp)}\left(\frac{C_{i,j,2}}{C_{i,j,1}}- \frac{C_{i,1,3}}{C_{i,1,2}}\right)\right)+o(t^{-\frexp})
	\end{eqnarray}
	and consequently
	\begin{eqnarray}
	t^\frexp\left(\frac{\pijt}{P_i[T_{0,\frexp}>t]}-\frac{\pi_j}{\mu_1}\frac{C_{i,j,1}}{C_{i,1,2}}\right)&=&\frac{\pi_j}{\mu_1}\frac{C_{i,j,1}}{C_{i,1,2}}\left( \frac{\Gamma(1-\frexp)}{\Gamma(1-2\frexp)}\left(\frac{C_{i,1,3}}{C_{i,1,2}}-\frac{C_{i,j,2}}{C_{i,j,1}}\right)\right)+o(1).
	\end{eqnarray}
	The limit is obtained by letting $t 
	\rightarrow \infty$ and recalling the identity $\QLD=\QLDb$. For $\frexp=1/2$ we have to consider now the asymptotic expansion
	\begin{eqnarray}
	E_{1/2,1}{(-\coef t^{1/2}) }&=&\frac{1}{\coef t^{1/2}}\frac{1}{\Gamma(1/2)}+\frac{1}{\theta^3t^{3/2}}\frac{1}{\Gamma(-1/2)}+O(t^{-2 }),
	\end{eqnarray}
	similarly
	\begin{eqnarray}
	\frac{\pijt}{P_i[T_{0,\frexp}>t]}&=&\frac{\pi_j}{\mu_1}\frac{C_{i,j,1}}{C_{i,1,2}}\left( 1+\frac{1}{t}\frac{\Gamma(1/2)}{\Gamma(-1/2)}\left(\frac{C_{i,j,3}}{C_{i,j,1}}- \frac{C_{i,1,4}}{C_{i,1,2}}\right)\right)+o(t^{-1}),
	\end{eqnarray}
	since  $\frac{\Gamma(1/2)}{\Gamma(-1/2)}=-\frac{1}{2}$ the proof concludes by using the same argument as the  case $\frexp \neq 1/2$.
\end{proof}
\section{Quasi Stationary Distributions}\label{QSDB}

In this section, we suppose that the initial state  $N_\frexp(0)$ is random, with a distribution $\qsd$ supported on $\N^+=\{1,2,3,\cdots\}$. In this case,
\begin{eqnarray}\label{QSDIS}
P_\qsd[N_\frexp(t)=k]&=& \sum_{i \geq 1} P[N_\frexp(0)=i]P[N_\frexp(t)=k|N_\frexp(0)=i], \\
&=&\sum_{i \geq 1} \qsd_i\pijt,
\end{eqnarray}
where $\pijt$ are the transition probabilities defined in the previous sections. More generally, given $A \subseteq \N^+$ we write
\begin{equation}\label{QSDISB}
P_\qsd[N_\frexp(t) \in A]=\sum_{k \in A} P_\qsd[N_\frexp(t)=k].
\end{equation}
We notice  that 
\begin{equation}\label{QSDISB2}
\pjnu= P_\qsd[N_\frexp(t)=j]
\end{equation}
is the solution to the system of equations 
\begin{eqnarray}\label{FORWsp}
\derfra p_{j,\frexp}(t) &=&\FORWsp, \hspace{0.5cm} j \geq 2,\\
\derfra p_{1,\frexp}(t)&=&-(\lambda_1+\mu_1)p_{1,\frexp}(t)+\mu_2p_{2,\frexp}(t), \nonumber
\end{eqnarray} 
with initial condition $p_{j,\frexp}(0)=\nu_j$, $j\geq 1$.

We say that a probability measure $\qsd$ is a quasi stationary distribution if for all  $A \subseteq \N^+$ and $t \geq 0$ the following identity follows
\begin{equation}
P_\qsd[N_\frexp(t) \in A, T_{0,\frexp} >t]=\qsd(A)P_\qsd[T_{0,\frexp} >t].
\end{equation}
When $\frexp=1$, a  quasi-stationary stationary distribution is a solution to the system $\qsd^t Q^{(a)}=-\theta \qsd$
for some $\theta \in (0, \theta^*]$, where $-Q^{(a)}$ is the matrix obtained by removing from the original matrix $Q$ the row and the column associated with the absorbent state $0$. Moreover
\begin{eqnarray}
P_\qsd[T_{0}>t]&=&e^{-\theta t}\label{exp1}\\
P_\qsd[N_1(t)=j,T_{0}>t]&=&\qsd_j e^{-\theta t} \label{exp2}.
\end{eqnarray}
The next proposition states a similar property in the fractional case.
\begin{proposition}\label{QSDPROP}
	Let $\nu$ be a quasi stationary distribution, then for all $t>0$ the following identities are satisfied
	\begin{eqnarray}
	P_\qsd[T_{0,\frexp}>t]&=&\ML{-\theta t^\frexp}\label{abs1}\\
	P_\qsd[N_\frexp(t)=j, T_{0,\frexp}>t]&=&\qsd_j \ML{-\theta t^\frexp}\label{abs2}.
	\end{eqnarray}
	\begin{proof}
		Is a direct consequence of Theorem \ref{ChanTime}. Given $t \geq 0$ fixed, from equation \eqref{exp1} we have
		\begin{eqnarray}
		P_\qsd[T_{0,\frexp}>t]&=&\int_{0}^{\infty}P_\qsd[T_{0}>u]\dPE\\
		&=&\int_{0}^{\infty}e^{-\theta u}\dPE\\
		&=&\ML{-\coef t^\frexp}.
		\end{eqnarray}
		The equation \eqref{abs2} is obtained from \eqref{exp2} by using the same argument.
	\end{proof}
\end{proposition}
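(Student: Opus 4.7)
The plan is to reduce the fractional problem to the classical ($\frexp = 1$) case using Theorem \ref{ChanTime}, which represents $N_\frexp(t)$ as the time change $N_1(\Et{t})$ with $\Et{t}$ independent of $N_1$. The two classical identities \eqref{exp1} and \eqref{exp2} then feed directly into the conditioning, and the Mittag-Leffler function appears via the Laplace identity \eqref{LEBSTI}.

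First I would observe that, since state $0$ is absorbing for $N_1$ (and hence for $N_\frexp$ via the time change), the single-time events rewrite as $\{T_{0,\frexp} > t\} = \{N_\frexp(t) \geq 1\}$ and, for $j \geq 1$, $\{N_\frexp(t) = j,\, T_{0,\frexp} > t\} = \{N_\frexp(t) = j\}$. In particular we only need the one-dimensional marginal of $N_\frexp(t)$, which is unambiguously furnished by Theorem \ref{ChanTime}.

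Next I would condition on $\Et{t} = u$ and use the independence of $\Et{t}$ and $N_1$ to write
\begin{equation*}
P_\qsd[T_{0,\frexp} > t] \;=\; \int_0^\infty P_\qsd[T_0 > u]\, \dPE \;=\; \int_0^\infty e^{-\coef u}\, \dPE,
\end{equation*}
where the second equality invokes \eqref{exp1}. The second line of \eqref{LEBSTI} then identifies the right-hand side as $\ML{-\coef t^\frexp}$, yielding \eqref{abs1}. For \eqref{abs2} the identical computation, but with \eqref{exp2} in place of \eqref{exp1}, produces a factor of $\qsd_j$ in front.

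The main (and really only) obstacle is that Theorem \ref{ChanTime} delivers equality of finite-dimensional distributions rather than trajectory-wise equality, so one cannot blithely assert the pathwise event identity $\{T_{0,\frexp} > t\} = \{T_0 > \Et{t}\}$. Routing through the one-time marginal event $\{N_\frexp(t) \geq 1\}$ as above sidesteps this issue entirely, and everything else is a mechanical application of the Laplace identities collected in Section \ref{CARPRO}.
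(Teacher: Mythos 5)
Your proposal is correct and follows essentially the same route as the paper: condition on $\Et{t}=u$ using the time-change representation of Theorem \ref{ChanTime}, substitute the classical identities \eqref{exp1} and \eqref{exp2}, and recognize the resulting integral as $\ML{-\coef t^\frexp}$ via \eqref{LEBSTI}. Your extra care in reducing $\{T_{0,\frexp}>t\}$ to the one-time marginal event $\{N_\frexp(t)\geq 1\}$, rather than invoking a pathwise identity that the finite-dimensional equality of Theorem \ref{ChanTime} does not directly license, is a point the paper glosses over.
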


The following theorem states that the family of quasi-stationary distributions is the same for all $\frexp \in (0,1]$.
\begin{theorem}\label{CaracQSD}
	A probability measure $\qsd$ is quasi-stationary  distribution  if and only if solves  the system of equations
	\begin{eqnarray}\label{sistqsd}
	-\theta \qsd_j&=&\FORWqsd, \hspace{1cm}j \geq 2,\\
	-\theta \qsd_1&=&-(\lambda_1+\mu_1)\qsd_1+\mu_2 \qsd_2,
	\end{eqnarray} 
where $\theta=\mu_1 \qsd_1$. 
\end{theorem}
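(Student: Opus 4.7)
The approach is to reduce both directions to the eigenvalue property $\mathbb{D}^\alpha E_{\alpha,1}(-\theta t^\alpha)=-\theta E_{\alpha,1}(-\theta t^\alpha)$ of the Mittag--Leffler function, applied to the fractional forward system \eqref{FORWsp}. In this way the ``QSD condition'' $P_\nu[N_\alpha(t)=j,T_{0,\alpha}>t]=\nu_j P_\nu[T_{0,\alpha}>t]$ is translated into a pure left-eigenvector equation for the sub-stochastic generator obtained by removing row/column $0$ from $Q$, which is the same for every $\alpha\in(0,1]$. This is precisely why the family of QSDs does not depend on $\alpha$.

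For the forward implication, assume $\nu$ is a QSD. Proposition \ref{QSDPROP} gives a constant $\theta>0$ with $P_\nu[N_\alpha(t)=j,T_{0,\alpha}>t]=\nu_j E_{\alpha,1}(-\theta t^\alpha)$ for all $j\geq 1$. Since $\{N_\alpha(t)=j\}\subseteq\{T_{0,\alpha}>t\}$ whenever $j\geq 1$, this equals $p_{j,\alpha}(t)=P_\nu[N_\alpha(t)=j]$, and similarly $p_{0,\alpha}(t)=1-E_{\alpha,1}(-\theta t^\alpha)$. Plugging these expressions into \eqref{FORWsp}, using the Mittag--Leffler eigenvalue identity, and dividing through by the strictly positive factor $E_{\alpha,1}(-\theta t^\alpha)$, yields exactly the claimed system for $j\geq 2$ and for $j=1$. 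To identify the eigenvalue, I substitute into the boundary forward equation $\mathbb{D}^\alpha p_{0,\alpha}(t)=\mu_1 p_{1,\alpha}(t)$ (see \eqref{FRW0}); the left-hand side equals $\theta E_{\alpha,1}(-\theta t^\alpha)$ (since the Caputo derivative kills the constant $1$), while the right-hand side equals $\mu_1\nu_1 E_{\alpha,1}(-\theta t^\alpha)$, forcing $\theta=\mu_1\nu_1$.

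For the converse, suppose the probability measure $\nu$ satisfies the stated linear system with $\theta=\mu_1\nu_1$. Define candidates $f_j(t):=\nu_j E_{\alpha,1}(-\theta t^\alpha)$ for $j\geq 1$ and $f_0(t):=1-E_{\alpha,1}(-\theta t^\alpha)$. A direct calculation, using $\mathbb{D}^\alpha E_{\alpha,1}(-\theta t^\alpha)=-\theta E_{\alpha,1}(-\theta t^\alpha)$ and the algebraic recursion on $\nu$, shows that $\{f_j\}_{j\geq 0}$ solves the fractional forward system with initial data $f_j(0)=\nu_j$, and the boundary identity $\theta=\mu_1\nu_1$ guarantees compatibility with $\mathbb{D}^\alpha f_0=\mu_1 f_1$. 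Moreover $\sum_{j\geq 0}f_j(t)\equiv 1$. Invoking uniqueness of the solution to this fractional Cauchy problem (inherited from the classical Kolmogorov system via the time-change representation of Theorem \ref{ChanTime}, since $f_j(t)=\nu_j E_{\alpha,1}(-\theta t^\alpha)=E[\nu_j e^{-\theta L(t)}]$-type structure matches the subordinated classical solution), one concludes $p_{j,\alpha}^\nu(t)=f_j(t)$, hence for every $A\subseteq\mathbb{N}^+$,
\[
\frac{P_\nu[N_\alpha(t)\in A,T_{0,\alpha}>t]}{P_\nu[T_{0,\alpha}>t]}=\frac{\sum_{j\in A}\nu_j E_{\alpha,1}(-\theta t^\alpha)}{E_{\alpha,1}(-\theta t^\alpha)}=\nu(A),
\]
so $\nu$ is quasi-stationary.

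The main obstacle is really the uniqueness step in the converse: the forward system is infinite-dimensional and of fractional order, so one cannot quote a standard ODE uniqueness result directly. The cleanest way around this is to pass through Theorem \ref{ChanTime} and reduce uniqueness for the fractional system to uniqueness for the classical forward system, where standard arguments for birth--death transition probabilities apply; alternatively, one can take Laplace transforms in $t$, which transform \eqref{FORWsp} into a classical linear recursion in $j$ whose minimal nonnegative solution is unique. Everything else is algebraic manipulation driven by the Mittag--Leffler eigenvalue identity.
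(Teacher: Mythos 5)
Your proof follows essentially the same route as the paper's: both directions hinge on Proposition \ref{QSDPROP} together with the Mittag--Leffler eigenvalue identity $\mathbb{D}^{\alpha}E_{\alpha,1}(-\theta t^{\alpha})=-\theta E_{\alpha,1}(-\theta t^{\alpha})$, the converse is built from the same candidate solution $p_{j,\alpha}(t)=\nu_j E_{\alpha,1}(-\theta t^{\alpha})$, $p_{0,\alpha}(t)=1-E_{\alpha,1}(-\theta t^{\alpha})$, and the eigenvalue is identified through the boundary relation $\mathbb{D}^{\alpha}p_{0,\alpha}(t)=\mu_1 p_{1,\alpha}(t)$ (you divide by the strictly positive Mittag--Leffler factor where the paper lets $t\rightarrow 0^{+}$, which is immaterial). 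The one substantive difference is that you explicitly flag, and sketch how to close, the uniqueness step for the infinite-dimensional fractional forward system in the converse --- a point the paper's proof passes over in silence --- and your suggested reduction to the classical system via Theorem \ref{ChanTime} is a reasonable way to make that step rigorous.
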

\begin{proof}
We follow the same approach as Van Doorn \cite{Doorn91}. If $\nu$ is a probability measure that  solves  \eqref{sistqsd} for $\theta=\mu_1 \qsd_{1}$, the transition probabilities defined as
	\begin{eqnarray}
	p_{0,\frexp}(t)&=&1-\ML{-\theta t^\frexp},\\
	p_{j,\frexp}(t)	&=& \qsd_j \ML{-\theta t^\frexp},
	\end{eqnarray}
	satisfy the system of equations \eqref{FORWsp} with initial distribution $p_{j,\frexp}(0)= \qsd_j$, $j \geq 1$. Reciprocally, if $\qsd$ is a quasi-stationary distribution, we know from proposition \ref{QSDPROP}  that $\pjnu(t)$ defined in \eqref{QSDISB2} satisfies
	\begin{eqnarray}
\mathbb{D}^{\frexp}\pjnu&=&-\coef \pjnu.
	\end{eqnarray}
In addition, $\pjnu$ is a solution to \eqref{FORWsp} $p_{j,\frexp}(0)=\qsd_j$, $j \geq 1$ and consequently
	 \begin{eqnarray}
-\coef \pjnu(t)&=&\lambda_{j-1}  p_{j-1,\frexp}(t)-(\lambda_j+\mu_j) p_{j,\frexp}(t) +\mu_{j+1} p_{j+1,\frexp}(t), ~~j \geq 2,\\
-\coef  p_{1,\frexp}(t)&=& -(\lambda_1+\mu_1) p_{1,\frexp}(t) +\mu_{2} p_{2,\frexp}(t).
	 \end{eqnarray}
By taking the limit $t \rightarrow 0^+$  we deduce that $\nu$ is a solution to \eqref{sistqsd}. Finally, from equation \eqref{abs1} we get 
\begin{eqnarray}
\mathbb{D}^{\frexp} p_{0,\frexp}(t)&=& \mathbb{D}^{\frexp} (1-P_\qsd[T_{0,\frexp}>t])\\
&=& \coef P_\qsd[T_{0,\frexp}>t],
\end{eqnarray}
similarly
	\begin{equation}
\mathbb{D}^{\frexp} p_{0,\frexp}(t)= \mu_1  p_{1,\frexp}(t), 
	\end{equation}
so $ \mu_1  p_{1,\frexp}(t)=\coef P_\qsd[T_{0,\frexp}>t]$. By letting $t \rightarrow 0^+$ we deduce $\coef=\mu_1 \nu_1 $, concluding the proof.
\end{proof}
Since the family of quasi-stationary distributions is the same for the complete interval $\frexp \in (0,1]$, its characterization coincides with the one originally presented by Van Doorn, enunciated below. 
\begin{theorem}
	[van Doorn \cite{Doorn91}] 
	If the series $D=\sum_{i\geq 2}(\mu_{i}\pi_{i})^{-1}\sum_{j\geq i}\pi_{j}$  diverges, then either $\theta^\star=0$ and there is no qsd distribution or $\theta^\star>0$, in which case there is a family of qsd distributions indexed by $\qsd_\theta$, $\theta\in (0,\theta^\star]$. If the $D$ series converges, then there is a unique distribution qsd indexed by $\theta^\star$.
\end{theorem}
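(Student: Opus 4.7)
The plan is to translate the characterization in Theorem~\ref{CaracQSD} into a question about the orthogonal polynomials $\psi_j(\theta)$ of \eqref{recursive}. Reversibility $\pi_i q_{i,j} = \pi_j q_{j,i}$ (equivalently $\mu_{j+1}\pi_{j+1} = \lambda_j \pi_j$ for $j \geq 1$) conjugates the forward--type three--term recurrence \eqref{sistqsd} for $\nu$ into the backward recurrence \eqref{recursive} for $\psi_j$: substituting $\nu_j = c\,\pi_j \widetilde\psi_j$ in \eqref{sistqsd} forces $\widetilde\psi_j = \psi_j(\theta)$, so every candidate qsd has the form $\nu_j^{(\theta)} = c(\theta)\, \pi_j \psi_j(\theta)$ for some $\theta > 0$ and some normalising constant $c(\theta) > 0$.

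For such a $\nu^{(\theta)}$ to be a genuine qsd, two things must happen. First, $\psi_j(\theta) > 0$ for every $j \geq 1$: this is a standard feature of the Karlin--McGregor polynomials of a tridiagonal Jacobi operator, and it holds precisely for $\theta \in [0, \theta^\star]$. Consequently, if $\theta^\star = 0$ no admissible $\theta > 0$ exists and no qsd can be constructed, which is the first alternative of the statement. Second, telescoping the forward recurrence for $u_j := \pi_j \psi_j(\theta)$ and using detailed balance yields the identity
\[
\theta \sum_{j=1}^{N} \pi_j \psi_j(\theta) \;=\; \mu_1 \;-\; \lambda_N \pi_N\bigl(\psi_{N+1}(\theta) - \psi_N(\theta)\bigr).
\]
Letting $N \to \infty$, $\nu^{(\theta)}$ is a qsd precisely when $\lambda_N \pi_N(\psi_{N+1}(\theta) - \psi_N(\theta)) \to 0$, in which case $S(\theta) := \sum_{j \geq 1} \pi_j \psi_j(\theta) = \mu_1/\theta < \infty$ and the consistency requirement $\theta = \mu_1 \nu_1^{(\theta)}$ of Theorem~\ref{CaracQSD} is automatic with $c(\theta) = \theta/\mu_1$.

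The remaining and main task, which I expect to be the real obstacle, is to decide for which $\theta \in (0, \theta^\star]$ the boundary term $\lambda_N \pi_N(\psi_{N+1}(\theta) - \psi_N(\theta))$ converges to $0$. The strategy is to iterate the identity $\lambda_j \pi_j(\psi_{j+1} - \psi_j) = \lambda_{j-1}\pi_{j-1}(\psi_j - \psi_{j-1}) - \theta \pi_j \psi_j$, expressing $\psi_N$ as a double sum of the weights $(\lambda_j \pi_j)^{-1}$ against the mass $\pi_k$, so that Abel's summation identifies $D = \sum_{i \geq 2}(\mu_i \pi_i)^{-1}\sum_{j \geq i}\pi_j$ as the exact quantity governing whether the process comes down from infinity, and hence the growth rate of $\psi_N(\theta)$. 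Divergence of $D$ forces the boundary term to vanish for every $\theta \in (0, \theta^\star]$, producing the full family $\{\nu^{(\theta)}\}$; convergence of $D$ makes the boundary term survive for $\theta < \theta^\star$, and the limit is zero only at $\theta = \theta^\star$, giving the unique qsd. This final asymptotic analysis is the one carried out in detail in Van~Doorn~\cite{Doorn91}.
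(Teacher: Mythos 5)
The paper offers no proof of this statement at all: it is quoted as van Doorn's classification and justified only by the preceding observation (Theorem \ref{CaracQSD}) that the family of quasi-stationary distributions is the same for every $\frexp\in(0,1]$, so the Markovian result of \cite{Doorn91} applies verbatim. Your sketch is a correct outline of van Doorn's own argument --- the conjugation $\nu_j=(\theta/\mu_1)\pi_j\psi_j(\theta)$ via detailed balance, positivity of $\psi_j(\theta)$ exactly for $\theta\le\theta^\star$, and the telescoped identity $\theta\sum_{j=1}^{N}\pi_j\psi_j(\theta)=\mu_1-\lambda_N\pi_N\bigl(\psi_{N+1}(\theta)-\psi_N(\theta)\bigr)$ whose boundary term decides normalisability --- and, like the paper, it defers the one genuinely hard step (the dichotomy governed by $D$) to \cite{Doorn91}, so it is consistent with, and indeed more informative than, what the paper provides.
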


However, the theorem \ref{QLD1} implies they do not necessarily attract the initial distributions, unless that the initial distribution coincides with some of the qsd. Also, the extinction rate now decays proportionally to $t^\frexp$. This behavior, which turns out to be different from the case $\frexp=1$, is a direct consequence of the long memory nature of the process.
%\section{Examples}
\section{The Linear Process}\label{EXAM}
We revisit the linear model, previously studied by Orsingher \& Polito \cite{LinearBDP2011}. The birth rates and the death rates are $\lambda_{i}=i \lambda $ and $\mu_{i}=i \mu$ respectively. To make our analysis simpler, we assume first that the initial condition is $N_\frexp(0)=1$. It is well known that in the case $\frexp=1$, the transition probabilities  are
\begin{equation}
p_{1,j,1}(t)=\left\{\begin{array}{ll}\frac{(\lambda t)^{k-1}}{(1+\lambda t)^{k+1}} & {\lambda=\mu} ,\\  \frac{\left(\lambda\left(1-e^{-(\lambda-\mu) t}\right)\right)^{j-1}}{\left(\lambda-\mu e^{-(\lambda-\mu) t}\right)^{j+1}} (\lambda-\mu)^{2}  e^{-(\lambda-\mu) t}& {\lambda \neq \mu}.\end{array}\right.
\end{equation}
Similarly, the probabilities of non extinction are
\begin{equation}\label{PRNE}
P_1[T_{0}>t]=\left\{\begin{array}{ll}\frac{\lambda-\mu}{\lambda}\left(1- \sum_{m \geq 1}\left(\frac{\mu}{\lambda}\right)^{m} e^{-(\lambda-\mu) m t }\right)& {\lambda>\mu}, \\ {\frac{\mu-\lambda}{\lambda} \sum_{m \geq 1}\left(\frac{\lambda}{\mu}\right)^{m}  e^{-(\mu-\lambda) m t }} & {\lambda<\mu}, \\\frac{1}{1+\lambda t} & {\lambda=\mu}.\end{array}\right.
\end{equation}
A widely known fact is that the asymptotic behavior  depends on  the ratio $\lambda/\mu$. In the fractional case the same occurs, so we study the three cases  separately.

	\subsection {The case $\lambda<\mu$}
    As we mentioned, we study  first the asymptotic behavior of the process with initial state $N_\frexp(0)=1$. When $\frexp \in (0,1)$ the probability of non extinction is
	\begin{equation}
	P_1[T_{0,\frexp}>t]=\left( \frac{\mu-\lambda}{\lambda}\right)\sum_{m \geq 1}({\lambda}/{\mu})^m \ML{-(\mu-\lambda)mt^\frexp}.
	\end{equation}
We recall  that for all $m\geq 1$ we have the limit
	\begin{eqnarray}\label{LIMI}
\lim_{t \rightarrow \infty }	t^\frexp  \ML{-(\mu-\lambda)mt^\frexp}=\frac{1}{\Gamma(1-\frexp)} \frac{1}{(\mu-\lambda)m}.
	\end{eqnarray}
From \ref{LIMI}  and the dominated convergence theorem, we obtain
\begin{eqnarray}\label{ABSTIME}
\lim_{t \rightarrow \infty } t^\frexp P_1[T_{0,\frexp}>t]&=&\left( \frac{\mu-\lambda}{\lambda}\right) \lim_{t \rightarrow \infty } t^\frexp  \sum_{m \geq 1}({\lambda}/{\mu})^m \ML{-(\mu-\lambda)mt^\frexp}\\
&=&\left( \frac{\mu-\lambda}{\lambda}\right)  \sum_{m \geq 1}({\lambda}/{\mu})^m \lim_{t \rightarrow \infty } t^\frexp  \ML{-(\mu-\lambda)mt^\frexp}\\
&=&\frac{1}{\Gamma(1-\frexp)}\frac{1}{ \lambda }\sum_{m \geq 1}\frac{(\lambda/\mu)^m}{m}\\
&=&-\frac{1}{\Gamma(1-\frexp)}\frac{1}{\lambda}\ln \left( 1-\frac{\lambda}{\mu}\right).
\end{eqnarray}
The limit can be deduced in an alternative way. We now consider the representation
$$	P_1[T_{0,\frexp}>t]=\frac{\mu-\lambda}{\lambda}\frac{(\lambda/\mu)e^{-(\mu-\lambda)t}}{1-\frac{\lambda}{\mu}e^{-(\mu-\lambda)t}},$$ 
from equation \eqref{lim2}
\begin{eqnarray}\label{ABSTIMEalt}
\lim_{t \rightarrow \infty } P_1[T_{0,\frexp}>t]&=& \frac{1}{\Gamma(1-\frexp)}\int_{0}^{\infty}\frac{\mu-\lambda}{\lambda}\frac{(\lambda/\mu)e^{-(\mu-\lambda)t}}{1-\frac{\lambda}{\mu}e^{-(\mu-\lambda)t}}du \\
&=&\frac{1}{\Gamma(1-\frexp)}\left.\ln \left( 1-\frac{\lambda}{\mu}e^{-(\mu-\lambda)t}\right)\right|_{0}^{\infty}\\
&=&-\frac{1}{\Gamma(1-\frexp)}\frac{1}{\lambda}\ln \left( 1-\frac{\lambda}{\mu}\right).
\end{eqnarray}
Similarly, for all $j \geq 1$
\begin{eqnarray}
\int_{0 }^{\infty}P_{i}[N_1(t)=j,T_0>t]dt&=&(\lambda-\mu)^{2} \int_{0}^{\infty} \frac{\left(\lambda\left(1-e^{-(\lambda-\mu) t}\right)\right)^{j-1}}{\left(\lambda-\mu e^{-(\lambda-\mu) t}\right)^{j+1}}   e^{-(\lambda-\mu) t}dt\\
&=& (\lambda-\mu)^{2} \int_{0}^{\infty} \frac{\left(\lambda\left(e^{(\lambda-\mu) t}-1\right)\right)^{j-1}}{\left(\lambda e^{(\lambda-\mu) t}-\mu\right)^{j+1}}   e^{(\lambda-\mu) t}dt
\end{eqnarray}
and after some computations we get
\begin{eqnarray}\label{limitb}
\int_{0 }^{\infty}P_{1}[N_1(t)=j,T_0>t]dt&=&\frac{1}{\lambda }\frac{(\lambda/\mu)^j}{j}.
\end{eqnarray}
Conditioned to $N_\frexp(0)=1$, the quasi-limiting distribution is deduced from  \eqref{ABSTIMEalt} and \eqref{limitb}
	\begin{equation}
\lim_{t \rightarrow \infty}{P_i[N_\frexp(t)=j|T_{0,\frexp}>t]}=\frac{\frac{(\lambda/\mu)^j}{j}}{\sum_{j \geq 1}\frac{(\lambda/\mu)^j}{j}}\label{YAGLIM3spex1}.
\end{equation}
Since $P_{1,j}=\frac{(\lambda/\mu)^{j-1}}{j}$, it can also be written into the form
\begin{eqnarray}
\lim_{t \rightarrow \infty}{P_1[N_\frexp(t)=j|T_{0,\frexp}>t]}&=&\frac{P_{1,j}}{\sum_{j\geq 1} P_{1,j}},
\end{eqnarray}
which is consistent with our theorems. Now, the quasi limiting behavior when the initial condition is $N_\frexp(0) > 1$, can be obtained directly from \eqref{YAGLIM3}. We recall the expressions $\pi_j=\frac{(\lambda/\mu)^{j-1}}{j}$ and  $\lambda_j=j\lambda$, so  $\lambda_j \pi_j=\mu (\lambda/\mu)^j$ and equation \eqref{defPP2} becomes
\begin{eqnarray}
P_{i,j}&=&\frac{(\lambda/\mu)^{j-1}}{j}\left[ \frac{1}{\mu}+\sum_{k=1}^{\min\{ i,j-1\}}\frac{1}{\mu (\lambda/\mu)^k}\right]\\
&=&\frac{(\lambda/\mu)^{j}}{\lambda j} \sum_{k=0}^{\min\{ i,j-1\}}\frac{1}{ (\lambda/\mu)^k}\\
&=&\frac{(\lambda/\mu)^{j}}{\lambda j}  \frac{\CUO^{-1-\min\{ i,j-1\}}-1}{\CUO^{-1}-1}\label{QLDLI}.
\end{eqnarray}
The equation \ref{QLDLI} is the same as
\begin{equation}
P_{i,j}= \left\{ \begin{array}{ll}
\frac{1}{\lambda j}  \frac{1-\CUO^{j}}{\CUO^{-1}-1}, &     j \leq i ,\\
\frac{1}{\lambda j}  \frac{\CUO^{j-i-1}-\CUO^{j}}{\CUO^{-1}-1}, &  j \geq i+1. \\
\end{array}
\right.
\end{equation}
The quasi limiting distribution is
\begin{eqnarray}
\frac{P_{i,j}}{\sum_{j \geq 1}P_{i,j}}&=&  \frac{\frac{1}{j}(\CUO^{-1+\max\{ 1,j-i\}}-\CUO^j)}{\sum_{j \geq 1}\frac{1}{j}(\CUO^{-1+\max\{ 1,j-i\}}-\CUO^j)}.
\end{eqnarray}
We notice that in the limit case $i \rightarrow \infty$ we have $P_{\infty,j}=\frac{1}{\lambda j}  \frac{1-\CUO^{j}}{\CUO^{-1}-1} $, which is not a probability measure since  $\sum_{j \geq 1}\frac{1}{j}=\infty$.
\subsection{The case $\lambda \geq \mu$} 
We study first the case $\lambda =\mu$. The probability of non extinction is
 \begin{eqnarray}
 P_1[T_{0,\frexp}>t]=\int_0^\infty e^{-z}\ML{-\lambda t^\frexp z}dz.
 \end{eqnarray}
 Alternatively, for $\frexp=1$ we have  the identity  $P[T_{0}>t]=\frac{1}{1+\lambda t}$. From  theorem \ref{ChanTime} we get the formula
  \begin{eqnarray}
 P_1[T_{0,\frexp}>t]&=&E\left[\frac{1}{1+\lambda \Et{t}}\right].
 \end{eqnarray}
 In order to study the asymptotic behavior, we introduce the function
$$
f(t)= t^{\frexp} \left(\log \log t^\frexp \right)^{1-\frexp}.
 $$
 It is  well known that (see for instance Bertoin \cite{Bertoin99})
 \begin{equation}\label{iterlog}
 \limsup_{t \rightarrow \infty}\frac{\Et{t}}{f(t)}=C_\frexp \hspace{1cm}
 \end{equation}
with probability $1$, for some positive constant $C_\frexp$ depending only on $\frexp$ . By taking the limit $t \rightarrow \infty$, we now get
   \begin{eqnarray}
 \liminf_{t \rightarrow \infty} f(t) P_1[T_{0,\frexp}>t]
 &=&\liminf_{t \rightarrow \infty} E\left[\frac{f(t)}{1+\lambda   \Et{t} }\right]\\
 &\geq& E\left[\liminf_{t \rightarrow \infty} \frac{f(t)}{1+\lambda   \Et{t} }\right]\\
 &\geq& E\left[ \frac{1}{\limsup_{t \rightarrow \infty} \frac{1}{f(t)}+\lambda \frac{ \Et{t}}{f(t)} }\right].
 \end{eqnarray}
From equation \eqref{iterlog} and noticing that $\lim_{t \rightarrow \infty} f(t)=\infty$ we conclude
  \begin{eqnarray}
 \liminf_{t \rightarrow \infty} f(t) P_1[T_{0,\frexp}>t]
 &\geq& \frac{1}{\lambda C_\frexp}.
 \end{eqnarray}
Similarly, we compute  

\begin{eqnarray}
\limsup_{t \rightarrow \infty } \frac{\pjt}{ P_1[T_{0,\frexp}>t]}&=&\limsup_{t \rightarrow \infty } \frac{f(t)\pjt}{ f(t)P_1[T_{0,\frexp}>t]}\\
&\leq& \frac{ \limsup_{t \rightarrow \infty }f(t)\pjt}{  \liminf_{t \rightarrow \infty }f(t)P_1[T_{0,\frexp}>t]}\\
&\leq& \lambda C_\frexp  \limsup_{t \rightarrow \infty }f(t)\pjt.
\end{eqnarray}
The identity  $\pjt=E\left[ \frac{(\lambda \Et{t})^{j-1}}{(1+\lambda \Et{t})^{j+1}}\right]$ allows us to compute the limit
\begin{eqnarray}
\limsup_{t \rightarrow \infty } f^2(t)\pjt &=&\limsup_{t \rightarrow \infty }f^{2}(t)E\left[ \frac{(\lambda \Et{t})^{j-1}}{(1+\lambda \Et{t})^{j+1}}\right]\\
&\leq&E\left[ \limsup_{t \rightarrow \infty } f^{2}(t)\frac{(\lambda \Et{t})^{j-1}}{(1+\lambda \Et{t})^{j+1}}\right]\\
&\leq&\frac{1}{\lambda^2}E\left[ \limsup_{t \rightarrow \infty }\frac{1}{(\Et{t}/f(t))^2}\right]\\
&=&\frac{1}{(\lambda C_\frexp)^2}.
\end{eqnarray}
Concluding 
\begin{eqnarray}
 \limsup_{t \rightarrow \infty }f(t)\pjt&\leq& \lim_{t \rightarrow \infty } \frac{1}{f(t)}\limsup_{t \rightarrow \infty } f^2(t)\pjt\\
 &=&0.
\end{eqnarray}
Consequently, there is no a quasi limiting distribution as expected. Finally, when $\lambda>\mu$ the  probability of non extinction is
\begin{eqnarray}
P_1[T_{0,\frexp}>t]=\frac{\mu}{\lambda}-\frac{\lambda-\mu}{\lambda} \sum_{m \geq 1}\left(\frac{\mu}{\lambda}\right)^{m} \ML{-(\lambda-\mu) mt^{\frexp}},
\end{eqnarray}
 which is a  strictly positive value, so that there is no a quasi-limiting distribution. By following a similar argument as the previous case, we get the  limit
\begin{equation}
\lim_{t \rightarrow \infty } t^\frexp \left(P_1[T_{0,\frexp}>t]-\frac{\lambda-\mu}{\lambda}\right)=\frac{1}{\Gamma(1-\frexp)}\frac{\lambda-\mu}{ \lambda }\sum_{m \geq 1}\frac{\CUO^m}{m}.
\end{equation}
\appendix
\section{The Mittag-Leffler function}\label{PROPML}
 
We present a  summary concerning  the basic properties of the Mittag-Leffler function. 
\begin{definition}
The complex-valued Mittag-Leffler function with one-parameter $\frexp \in (0,1]$ is defined as
	\begin{equation}
	\ML{z}=\defML{z},\hspace{1cm} z \in \C.
	\end{equation}
In particular, when $\frexp=1$ we have that  $E_{1,1}(z)=e^z$ is the exponential function.
\end{definition}
In the real valued case, it is well known that (see Proposition 3.23 \cite{Gorenflo2014}) the Mittag-Leffler function with negative argument $\ML{-x}$, $x>0$ is completely monotonic for all $0 \leq \frexp \leq 1$, \textrm{i.e.}
	\begin{equation}
	(-1)^n\frac{d^n}{d x^n }(\ML{-x}) \geq 0.
	\end{equation}
This is equivalent to the existence of a representation of $\ML{-x}$ in the form of a Laplace-Stieljes integral with non decreasing density and non-negative measure $d\mu$
$$
\ML{-x}=\int_{0}^{\infty}e^{-xu}d\mu(u).
$$
Since $\ML{0}=1$ we have from the dominated convergence theorem that $\mu$ is a probability measure. In fact, we know from equation \eqref{LEBSTI} for $s=x$ and $t=1$
\begin{eqnarray}
\ML{-x}&=&E[e^{-x\Et{1}}]\\
&=&\int_{0}^{\infty}e^{-xu}\dPEc,
\end{eqnarray}
and consequently the measure is $d\mu(u)=\dPEc$. In particular, from this integral representation we get that  the function $\ML{x}$ is strictly positive for $x \geq 0$.
\begin{proposition}
For all $\lambda>0$, $f(x)=E_{\frexp}(-\lambda x^\frexp)$ is  the unique solution to the  equation
	\begin{equation}\label{PVIFR}
	\mathbb{D}^\frexp f(x)=-\lambda f(x),\hspace{1cm}f(0)=1.
	\end{equation}
	\begin{proof}
It proceeds by direct computation by taking  the Laplace transform
\begin{equation}
\LAPL{f}=\int_{0}^{\infty} e^{-sx}f(x)dx
\end{equation}
at both sides of the equation \eqref{PVIFR}. At the right hand we just have $\LAPL{-\lambda f}=-\lambda\LAPL{f}$, whereas at left hand we have
		\begin{eqnarray}
		\LAPL{\mathbb{D}^{\frexp}f}&=&\LAPL{\int_0^x\frac{f^\prime(u) }{(x-u)^{\frexp}}du}\\
		&=&\LAPL{f^\prime}\LAPL{\frac{t^{-\frexp}}{\Gamma(1-\frexp)}}\\
		&=&\left( s\LAPL{f}-f(0)\right)s^{\frexp}.
		\end{eqnarray}
	This implies 
		\begin{equation}
		\LAPL{f}=\frac{s^{\frexp-1}}{\lambda+s^\frexp},
		\end{equation}
by taking the inverse 
\begin{equation}
f(x)=\defML{(-\lambda x^\frexp)}
\end{equation}
we conclude  $f(x)=E_{\frexp,1}(-\lambda x^\frexp)$.
	\end{proof}
\end{proposition}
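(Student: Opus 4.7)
My approach is to reduce the fractional Cauchy problem to an algebraic equation via the Laplace transform $\mathcal{L}[\cdot](s)$. The key input is the transform of the Caputo-Riemann-Liouville derivative,
\begin{equation*}
\mathcal{L}[\mathbb{D}^{\alpha} f](s) \;=\; s^{\alpha}\,\mathcal{L}[f](s) \;-\; s^{\alpha-1} f(0),
\end{equation*}
which follows immediately by writing $\mathbb{D}^{\alpha} f$ as the convolution $f' \ast k_{\alpha}$ with $k_{\alpha}(t)=t^{-\alpha}/\Gamma(1-\alpha)$ and combining the standard identities $\mathcal{L}[f'](s) = s\,\mathcal{L}[f](s) - f(0)$ and $\mathcal{L}[k_{\alpha}](s) = s^{\alpha-1}$ (the latter being a direct Gamma integral).

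Applying the transform to both sides of $\mathbb{D}^{\alpha} f = -\lambda f$ and inserting $f(0) = 1$ converts the ODE into the algebraic identity $(s^{\alpha} + \lambda)\,\mathcal{L}[f](s) = s^{\alpha-1}$, so that
\begin{equation*}
\mathcal{L}[f](s) \;=\; \frac{s^{\alpha-1}}{s^{\alpha} + \lambda}.
\end{equation*}
I would then recognise the right-hand side as the classical Laplace transform of $E_{\alpha,1}(-\lambda x^{\alpha})$. Concretely, expanding $1/(s^{\alpha}+\lambda) = \sum_{k \geq 0}(-\lambda)^{k} s^{-\alpha(k+1)}$ for $s^{\alpha} > \lambda$, multiplying by $s^{\alpha-1}$, and inverting term-by-term using $\mathcal{L}^{-1}[s^{-\alpha k-1}](x) = x^{\alpha k}/\Gamma(\alpha k + 1)$ yields $f(x) = \sum_{k \geq 0}(-\lambda x^{\alpha})^{k}/\Gamma(\alpha k + 1) = E_{\alpha,1}(-\lambda x^{\alpha})$.

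Uniqueness follows from the injectivity of the Laplace transform (Lerch's theorem) on a suitable class of exponentially bounded continuous functions, which forces any solution with the same initial data to coincide with $E_{\alpha,1}(-\lambda x^{\alpha})$. The delicate point is justifying the term-by-term Laplace inversion; a cleaner alternative I would fall back on is to \emph{verify directly} that $E_{\alpha,1}(-\lambda x^{\alpha})$ solves the equation by differentiating the defining series, using $\mathbb{D}^{\alpha} x^{\alpha k} = \Gamma(\alpha k + 1)/\Gamma(\alpha(k-1)+1)\,x^{\alpha(k-1)}$ for $k \geq 1$ and $\mathbb{D}^{\alpha}(1) = 0$, and then invoke a standard uniqueness theorem for linear fractional Cauchy problems to conclude.
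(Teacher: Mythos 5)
Your proposal is correct and follows essentially the same route as the paper: apply the Laplace transform, use the convolution representation of the Caputo derivative to get $\mathcal{L}[\mathbb{D}^{\alpha}f](s)=s^{\alpha}\mathcal{L}[f](s)-s^{\alpha-1}f(0)$, solve for $\mathcal{L}[f](s)=s^{\alpha-1}/(s^{\alpha}+\lambda)$, and invert to the Mittag-Leffler function. Your added care about term-by-term inversion and uniqueness via injectivity of the Laplace transform only makes explicit what the paper leaves implicit.
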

Concerning the asymptotic behavior of the complex valued Mittag-Leffler function, we take the formulas formulas 4.7.4 and 4.7.5 from p-75 of \cite{Gorenflo2014}. Let be  $\frexp \in (0,2)$ and let be $\frac{\pi \frexp}{2}<\theta<\min\{\pi,\frexp \pi\}$, where $\theta=arg(z)$. For all $|z|$ large enough we have
	\begin{eqnarray}
	{E_{\frexp}}(z)&=&\frac{1}{\frexp}{e^{z^{1/\frexp}}}-\sum_{m=1}^{N-1}\frac{z^{-m}}{\Gamma(1-m\frexp)}+O(|z|^{-N}),\hspace{0.5cm}|\arg(z)|\leq \theta.\\
	{E_{\frexp}}(z)&=&-\sum_{m=1}^{N-1}\frac{z^{-m}}{\Gamma(1-m\frexp)}+O(|z|^{-N}),\hspace{0.5cm}\theta \leq |\arg(z)|\leq \pi.
	\end{eqnarray}
In particular, when $\arg(z)=0$ and $\arg(z)=\pi$ we recover the asymptotic expansion for the real valued case. In fact, when $|x|$ is large enough we get the following approximations
\begin{eqnarray}
\ML{x}&=&\frac{1}{\frexp}{e^{x^{1/\frexp}}}-\frac{1}{x}\frac{1}{\Gamma(1-\frexp)}-\frac{1}{x^2}\frac{1}{\Gamma(1-2\frexp)}+O(x^{-3}),\hspace{0.5cm}x>0,\\
\ML{-x}&=&\frac{1}{x}\frac{1}{\Gamma(1-\frexp)}-\frac{1}{x^2}\frac{1}{\Gamma(1-2\frexp)}+O(x^{-3}),\hspace{0.5cm}x>0\label{ASYM}.
\end{eqnarray}
The variable $x$ can be replaced by $\pm \lambda t^{\frexp}$, so 
\begin{eqnarray}
\ML{\lambda t^\frexp}&=&\frac{1}{\frexp}{e^{\lambda ^{1/\frexp}t}}-\frac{1}{\lambda t^\frexp }\frac{1}{\Gamma(1-\frexp)}-\frac{1}{\lambda^2 t^{2 \frexp}}\frac{1}{\Gamma(1-2\frexp)}+O(t^{-3\frexp }),\hspace{0.5cm}t>0,\\
\ML{-\lambda t^\frexp}&=&\frac{1}{\lambda t^\frexp }\frac{1}{\Gamma(1-\frexp)}-\frac{1}{\lambda^2t^{2 \frexp}}\frac{1}{\Gamma(1-2\frexp)}+O(t^{-3\frexp }),\hspace{0.5cm}t>0\label{ASYMa}.
\end{eqnarray}
Moreover, the following limits are valid for all $\lambda>0$.
\begin{eqnarray}
\lim_{t \rightarrow \infty} t^\frexp{E_{\frexp,1}}(-\lambda t^\frexp)&=&\frac{1}{\lambda}\frac{1}{\Gamma(1-\frexp)},\\
\lim_{t \rightarrow \infty} {e^{-\lambda^{1/\frexp} t}}{E_{\frexp,1}}(\lambda t^\frexp)&=&\frac{1}{\frexp}.
\end{eqnarray}
\section*{Acknowledgements}
J.L. acknowledges the financial  support  of the grant program FONDECYT de Iniciaci\'on en Investigaci\'on, Project No. 11140479  and to the following Mathematics Institutes where part of this work were done: Universidad Cat\'olica del Norte (Antofagasta, Chile) and Instituto Potosino de Investigaci\'on Cient\'ifica y Tecnol\'ogica (San Luis Potos\'i, M\'exico). 
\newpage
\bibliographystyle{elsarticle-num}
\bibliography{Kernel}

\end{document}